\newtheorem{thm}{Theorem}
\newtheorem{lem}[thm]{Lemma}
\newtheorem{prop}[thm]{Proposition}
\newtheorem{conj}[thm]{Conjecture}
\newtheorem{cor}[thm]{Corollary}
\newtheorem{defe}[thm]{Definition}
\theoremstyle{remark}
\newtheorem{rem}[thm]{Remark}
\newtheorem{exam}[thm]{Example}
\newcommand\myurl[1]{\url{#1}}
\let\oldmarginpar\marginpar
\renewcommand\marginpar[1]{\-\oldmarginpar[\raggedleft\footnotesize #1]%
{\raggedright\footnotesize #1}}
\newcommand{\nc}{\newcommand}
\nc{\ssec}{\subsection}
\nc{\on}{\operatorname}
\nc {\cG} {\mathcal{G}}
\nc {\cK}{\mathcal{K}}
\nc {\cC} {\mathcal{C}}
\nc {\cL} {\mathcal{L}}
\nc {\cE} {\mathcal{E}}
\nc {\cM} {\mathcal{M}}
\nc {\cO}{\mathcal{O}}
\nc {\cF}{\mathcal{F}}
\nc {\cZ}{\mathcal{Z}}
\nc {\bZ}{\mathbb{Z}}
\nc {\bQ}{\mathbb{Q}}
\nc {\uG} {\underline{G}}
\nc {\cB}{\mathcal{B}}
\nc{\rat}{\mathrm{rat}}
\nc {\fk}{\mathfrak{k}}
\nc {\fI}{\mathfrak{i}}
\nc {\fg} {\mathfrak{g}}
\nc {\fu} {\mathfrak{u}}
\nc {\fl} {\mathfrak{l}}
\nc {\fn} {\mathfrak{n}}
\nc {\cP} {\mathcal{P}}
\nc {\fz} {\mathfrak{z}}
\nc {\fc}{\mathfrak{c}}
\nc {\fh}{\mathfrak{h}}
\nc {\fp}{\mathfrak{p}}
\nc{\tg} {\mathtt{g}}
\nc {\hfg} {\widehat{\fg}}
\nc {\hG} {\check{G}}
\nc {\bGm} {\mathbb{G}_m}
\nc{\bC}{\mathbb{C}}
\nc{\bV}{\mathbb{V}}
\nc{\bP}{\mathbb{P}}
\nc{\bA}{\mathbb{A}}
\nc{\Sl}{\mathfrak{sl}}
\nc{\ra}{\rightarrow}
\nc {\tU}{\tilde{U}}
\nc {\tSym}{\widetilde{Sym}}
\nc {\Bun}{\mathrm{Bun}}
\nc {\cA}{\mathcal{A}}
\nc {\Fun}{\mathrm{Fun}}
\nc {\crit}{\mathrm{crit}}
\nc {\Ind}{\mathrm{Ind}}
\nc {\Vac}{\mathrm{Vac}}
\nc {\gr}{\mathrm{gr}}
\nc {\ad}{\mathrm{ad}}
\nc {\Sym}{\mathrm{Sym}}
\nc {\Ram}{\mathrm{Ram}}
\nc {\FG}{\mathrm{FG}}
\nc {\Op}{\mathrm{Op}}
\nc {\Hitch}{\mathrm{Hitch}}
\nc {\fb}{\mathfrak{b}}
\nc{\cDt}{\mathcal{D}^\times}
\nc{\cDb}{\mathcal{D}_b^\times}
\nc{\cDbp}{\mathcal{D}_{b'}^\times}
\nc {\gl}{\mathfrak{gl}}
\nc {\Sp}{\mathfrak{sp}}
\nc {\So}{\mathfrak{so}}
\nc {\bR}{\mathbb{R}}
\nc {\Hom}{\mathrm{Hom}}
\nc {\Irr}{\mathrm{Irr}}
\nc {\Id}{\mathrm{Id}}
\nc {\Tr}{\mathrm{Tr}}
\nc {\rk}{\mathrm{rank}}
\nc {\rank}{\mathrm{rank}}
\nc {\cW}{\mathcal{W}}
\nc {\cI}{\mathcal{I}}
\nc {\Fr}{\mathrm{Fr}}
\nc {\ff}{\mathfrak{f}}
\nc {\LocSys}{\mathrm{LocSys}}
\nc {\Ga}{\mathrm{Ga}}
\nc {\ord}{\mathrm{ord}}
\nc{\pole}{\mathrm{pole}}
\newcommand{\C}{\mathbb{C}}
\newcommand{\dtt}{\frac{dt}{t}}
\newcommand{\duu}{\frac{du}{u}}
\newcommand{\n}{\nabla}
\newcommand{\nbr}{[\nabla]}
\newcommand{\Om}{\Omega}
\newcommand{\Ad}{\mathrm{Ad}}
\renewcommand{\a}{\alpha}
\renewcommand{\b}{\beta}
\newcommand{\fs}{\mathfrak{s}}
\newcommand{\Waff}{W^{\mathrm{aff}}}
\newcommand{\bcK}{\bar{\mathcal{K}}}
\newcommand{\Gm}{\mathbb{G}_m}
\DeclareMathOperator{\GL}{GL}
\DeclareMathOperator{\SL}{SL}
\DeclareMathOperator{\Gal}{Gal}
\DeclareMathOperator{\Spec}{Spec}
\DeclareMathOperator{\slope}{slope}
\DeclareMathOperator{\Res}{\mathrm{Res}}
\begin{document} 
\title{A geometric analogue of a conjecture of Gross and Reeder} 
\author{Masoud Kamgarpour}
\author{Daniel S. Sage} 

\subjclass[2010]{14D24, 20G25, 22E50, 22E67}

\address{School of Mathematics and Physics, The University of
  Queensland, Brisbane, QLD 4072, Australia} 
\email{masoud@uq.edu.au}

\address{Department of Mathematics, Louisiana State University, Baton
  Rouge, LA 70808, USA} 
\email{sage@math.lsu.edu}

\date{\today}

\begin{abstract} Let $G$ be a simple complex algebraic group. We prove
  that the irregularity of the adjoint connection of an irregular flat
  $G$-bundle on the formal punctured disk is always greater than or
  equal to the rank of $G$. This can be considered as a geometric
  analogue of a conjecture of Gross and Reeder. We will also show that
  the irregular connections with minimum adjoint irregularity are
  precisely the (formal) Frenkel-Gross connections. As a corollary, we establish the de Rham analogue of a conjecture of Heinloth, Ng\^o, and Yun for $G=\SL_n$. \end{abstract}

\keywords{Formal connections, Frenkel-Gross connection, irregularity,
  slope, Levelt-Turrittin  decomposition, flat $G$-bundles}
\maketitle 

\tableofcontents

\section{Introduction}

\subsection{The Gross-Reeder Conjecture} 
 Let $k$ be a $p$-adic field with
residue field $\mathfrak{f}$.  The Weil group $\cW$ of $k$ is the
subgroup of $\Gal(\bar{k}/k)$ which acts on the algebraic closure $\bar{\mathfrak{f}}$ by a
power of the Frobenius.  More explicitly, $\cW=\langle
\Fr\rangle \ltimes \cI$, where $\Fr$ is a geometric
Frobenius element and $\cI$ is the inertia group, i.e., the subgroup
of $\cW$ that acts trivially on $\bar{\mathfrak{f}}$.  The wild
inertia group $\cI_+$ is the pro-$p$-Sylow subgroup of $\cI$.

Let $G$ be a connected reductive algebraic group over $\bC$.  A
Langlands parameter (with values in $G$) is a homomorphism
$\phi:\cW\times\SL_2(\C)\to G$ such that the restriction to
$\SL_2(\C)$ is algebraic, the restriction to $\cI$ is continuous, and
$\phi(\Fr)$ is semisimple. The parameter is called discrete if the
centralizer of the image is finite; it is called inertially discrete
if there are no nonzero invariants of the action of $\phi(\cI)$ on the
Lie algebra $\mathfrak{g}$.

Note that a discrete, inertially discrete parameter $\phi$ is, in
particular, wildly ramified, i.e., the restriction of $\phi$ to
$\cI_+$ is nontrivial. A natural question that arises is what is the
minimum ``wildness'' possible for such a parameter.  One way to make
this precise is through an invariant called the (adjoint) \emph{Swan
  conductor}. Let $\Ad: G\ra \GL(\fg)$ denote the adjoint map. Then
$\Ad(\phi): \cW\times \SL_2(\C)\ra \GL(\fg)$ is a Langlands parameter
with values in a general linear group.  Given a $\GL_n$-parameter, the
Swan conductor is a canonical integer which measures how wildly
ramified it is; in particular, one can consider the Swan conductor of
$\Ad(\phi)$.  We can now state a special case of the Gross-Reeder
Conjecture \cite{GR}:

\begin{conj}[Gross-Reeder]\label{grconj} Suppose $G$ is simple. If the
  Langlands parameter $\phi:\cW\times \SL_2(\C)\ra G$ is discrete and
  inertially discrete, then the Swan conductor of $\Ad(\phi)$ is
  greater than or equal to the rank of $G$.
\end{conj}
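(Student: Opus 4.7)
The plan is to exploit the Moy--Prasad filtration theory of the reductive group $G$ over the local field $k$, converting the Swan conductor of $\Ad(\phi)$ into a quantity governed by the depth of $\phi|_{\cI}$ and the dimensions of fixed subspaces in $\fg$. Concretely, one has the upper-numbering ramification filtration $\{\cI^s\}_{s \geq 0}$ of $\cI$, and for any representation $V$ of $\cW$ the Swan conductor admits the integral formula
$$\on{Swan}(V) \;=\; \int_0^\infty \dim\bigl(V/V^{\cI^s}\bigr)\, ds.$$
Applied to $V = \fg$ with $\cW$ acting through $\Ad\circ\phi$, the inertial discreteness hypothesis $\fg^{\phi(\cI)} = 0$ forces the integrand to equal $\dim\fg$ on an initial interval $(0, s_1]$, and to remain strictly positive until one reaches the depth $r$ of $\phi|_\cI$ (the smallest $s$ with $\phi(\cI^{s^+}) = 1$); wild ramification, which is automatic from inertial discreteness, guarantees $r > 0$.

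Next, I would use Moy--Prasad theory to interpret the jumps of the ramification filtration in terms of a point $x$ of the Bruhat--Tits building and a sequence of graded subquotients $\fg_{x,s}/\fg_{x,s^+}$. Inertial discreteness means that the image of $\phi|_\cI$ in each graded piece meets the elliptic locus of the corresponding reductive quotient. The classification of such elliptic Moy--Prasad cosets (work of Reeder--Yu and Gross--Reeder) shows that the minimal possible value of $r$ is $1/h$, where $h$ is the Coxeter number, realized by the so-called simple supercuspidal parameters. Combining this with the integral formula above and estimating the dimensions of the non-fixed subspaces root-by-root should yield
$$\on{Swan}(\Ad\phi) \;\geq\; r\cdot\bigl(\dim\fg - \dim\fg_{x,0}\bigr) \;\geq\; \frac{1}{h}\cdot h\cdot\rk(G) \;=\; \rk(G),$$
using the identity $\dim\fg - \rk\fg = h\cdot\rk\fg$ for a simple Lie algebra.

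The main obstacle is the role of the $\SL_2$ factor in the discreteness hypothesis. Inertial discreteness alone is insufficient to pin down the depth; one must verify that when $\phi|_{\SL_2}$ is nontrivial, the remaining freedom in $\phi|_\cW$ does not allow one to lower the Swan conductor below $\rk(G)$. This requires a careful case analysis of the centralizers of $\phi(\SL_2)$ inside $G$ and a parallel application of Moy--Prasad to the reductive subgroup thus cut out. The analogous issue in the geometric setting is controlled by the Levelt--Turrittin decomposition, and I would expect the proof in this paper to proceed by establishing the geometric bound via Levelt--Turrittin, with the correspondence between irregularity and Swan conductor providing heuristic motivation for the arithmetic conjecture but not (yet) a formal reduction.
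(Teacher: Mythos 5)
The statement you were asked about is labeled as a \emph{conjecture} in the paper, and the paper does not prove it; there is therefore no ``paper's own proof'' to compare against. What the paper does prove is the geometric analogue, Theorem~\ref{t:main}, together with the characterization of the extremal case in Theorem~\ref{t:main2}. The proof of the geometric version goes through the Jordan (Levelt--Turrittin) form $d + (X+N)\frac{du}{u}$, an application of Babbitt--Varadarajan rationality to show the leading coefficient $x\in\fh$ must lie in the Springer eigenspace set $V(b)$, and then the purely Weyl-theoretic inequality $N(x)\ge br$ (Theorem~\ref{t:reflection}), proved case by case. Your closing guess that the geometric bound proceeds via Levelt--Turrittin and does \emph{not} formally reduce the arithmetic conjecture to the geometric one is correct.

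As a proposed proof of the arithmetic conjecture itself, your sketch has gaps that you would not be able to close along these lines. The assertion that inertial discreteness, $\fg^{\phi(\cI)}=0$, forces the integrand $\dim(\fg/\fg^{\cI^s})$ to equal $\dim\fg$ on an initial interval $(0,s_1]$ is false: for $s>0$ the group $\cI^s$ is already contained in the wild inertia $\cI_+$, and $\fg^{\phi(\cI_+)}$ may well be nonzero even when $\fg^{\phi(\cI)}=0$. (For example, the tame quotient can act on $\fg^{\phi(\cI_+)}$ with no invariants.) So the first inequality you write is not justified, and in fact the whole point of the difficulty --- and the reason Gross and Reeder only proved their conjecture assuming the residual characteristic does not divide $|W|$ --- is that controlling the drop in the integrand across the wild ramification breaks requires input beyond the inertial discreteness hypothesis. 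Your appeal to ``classification of elliptic Moy--Prasad cosets'' imports exactly the content that the conjecture asserts and is not an independent proof; and the quantity $\dim\fg_{x,0}$ in your estimate is ill-defined (the parahoric subalgebra is an infinite-dimensional $\bC[[t]]$-lattice), so presumably you mean the reductive quotient $\fg_{x,0}/\fg_{x,0+}$, but the inequality $\on{Swan}(\Ad\phi)\ge r(\dim\fg-\dim(\fg_{x,0}/\fg_{x,0+}))$ would still require an argument that your sketch does not supply. Finally, you correctly flag the $\SL_2$ factor as an obstacle but give no mechanism to handle it. None of these issues can be patched by the geometric methods of the paper: the paper's replacement for the ramification-filtration analysis is the Jordan form and the Weyl group bound $N(x)\ge br$, which have no straightforward $p$-adic analogue when $p$ divides $|W|$.
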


Gross and Reeder proved their conjecture in a number of important
cases.  In particular, they verified Conjecture~\ref{grconj} under the
assumption that the residual characteristic does not divide the order
of the Weyl group.  (Reeder has recently formulated and proven a
  modified version of this conjecture~\cite{R18}.) They also analyzed the situation where the Swan
conductor equals the rank. This led them to construct \emph{simple
  wild parameters} where equality is achieved.  They also constructed
simple supercuspidal representations of a $p$-adic group with dual
group $G$ which correspond under local Langlands to simple wild
parameters.

This theory has also had important applications to the global
Langlands program.  Indeed, Heinloth, Ng\^o, and Yun used these
results to construct Kloosterman sheaves--$\ell$-adic local systems on
$\bP^1\setminus\{0,\infty\}$ whose single wildly ramified singularity
corresponds to a simple wild parameter~\cite{HNY}. These sheaves are
\emph{cohomologically rigid}, i.e., they have no infinitesimal
deformations which preserve the formal isomorphism classes at $0$ and
$\infty$.  Moreover, they provide an example of the wild ramification
case of the Langlands correspondence between $\ell$-adic local systems
and Hecke eigensheaves.

\subsection{Translation to geometry} The goal of this paper is to formulate
and prove a geometric analogue of Conjecture \ref{grconj}. This is,
therefore, a continuation of our efforts to understand wild
ramification in the geometric Langlands program \cite{BremerSagemodspace,
  BremerSageisomonodromy, BremerSagemink,
  BremerSagereg, Sageisaac, MasoudHarish, MasoudConductor, MasoudTravis, CK}.

In the geometric world, formal flat $G$-bundles play the role of
Langlands parameters, cf. the appendix of \cite{Katz}. Accordingly, we
start by reviewing their definition and some of their numerical
invariants. For more information, see \S \ref{s:connections} and
\cite{BV, Katz, BremerSagemink}.

\subsubsection{Formal flat $G$-bundles} 
Let $\cK=\C(\!(t)\!)$ denote the field of formal Laurent series. Let
$\cDt=\Spec(\cK)$ be the formal punctured disk.  A formal flat
$G$-bundle $(\cE,\n)$ is a principal $G$-bundle $\cE$ on $\cDt$
endowed with a connection $\n$ (which is automatically flat).  Upon
choosing a trivialization, the connection may be written in terms of
its matrix 
\[
\nbr_\phi \in \Om^1_{F}(\fg(\cK))
\]
 via $\n=d+\nbr_\phi$.
If one changes the trivialization by an element $g\in G(\cK)$, the
matrix changes by the \emph{gauge action}:
\begin{equation}\label{gauge}
\nbr_{g \phi} = g \cdot \nbr_\phi = \Ad(g) (\nbr_\phi) - (dg) g^{-1}.
\end{equation}
Accordingly, the set of isomorphism classes of flat $G$-bundles on
$\cDt$ is isomorphic to the quotient $\fg(\cK)\dtt/G(\cK)$, where the
loop group $G(\cK)$ acts by the gauge action.

\subsubsection{Irregular Connections} 

Recall that a flat $G$-bundle $(\cE,\phi)$ on $\cDt$ is called
\emph{regular singular} if the connection matrix has only simple poles
with respect to some trivialization; otherwise, it is called
irregular. It is well-known that irregular connections are geometric
analogues of wildly ramified Langlands parameters.  In this paper, we
will be concerned with two invariants which measure ``how irregular''
a flat $G$-bundle is: the slope and the irregularity.

\subsubsection{Slope} 

There are several equivalent definitions of the slope. The simplest to
describe (though not necessarily to compute) uses the fact that there
exists a ramified cover $\cDb=\Spec(\C(\!(u))\!)$ with $u=t^{1/b}$ and
a trivialization of the pullback bundle such that the pullback
connection is of the form
\begin{equation*}
  d+(X_{-a}u^{-a}+X_{1-a}u^{1-a}+\dots)\duu, \quad \quad X_i\in\fg,\quad
  \text{$X_{-a}$ non-nilpotent}, \quad a\geq 0.
\end{equation*}

It turns out that the quotient $a/b$ is independent of the choice of
such an expression, and one calls it the slope of
$\n$.  The
slope is positive if and only if the flat $G$-bundle is irregular, and
the smallest possible positive slope is $1/h$, where $h$ is the
Coxeter number of $G$ \cite{FG, CK, BremerSagemink}.

\subsubsection{Irregularity} 

To start with, suppose $G=\GL_n$. In this case, a flat $G$-bundle is
equivalent to a pair $(V,\n_V)$ consisting of a vector bundle on
$\cDt$ endowed with a connection.  It is a well-known result of
Turrittin \cite{Turrittin} and Levelt \cite{Levelt} that after passing
to a ramified cover $\cDb$, the pullback connection can be decomposed
as a finite direct sum
\begin{equation*}
\bigoplus (L_i\otimes M_i,\n_{L_i}\otimes \n_{M_i}),
\end{equation*}
where $(L_i,\n_{L_i})$ is rank one and $(M_i,\n_{M_i})$ is regular
singular.  Let $s_i$ denote the slope (in the sense defined above) of
the flat connection $(L_i\otimes M_i,\n_{L_i}\otimes \n_{M_i})$.  Then
the irregularity $\Irr(\n_V)$ is the sum of the slopes where each
slope $s_i$ appears with multiplicity $\dim(M_i)$. One can show that
the irregularity is a nonnegative integer that is zero if and only if
$V$ is regular singular, cf. ~\cite{Katz}.

Now, suppose $G$ is a connected reductive group. Let $(\cE,\nabla)$ be
a flat $G$-bundle on $\cDt$.  We will be interested in the
irregularity of the associated adjoint flat vector bundle$(\Ad(\cE),
\Ad(\nabla))$.  Its irregularity $\Irr(\Ad(\nabla))$ is a nonnegative
integer which is positive if and only if $(\cE,\nabla)$ is irregular.
It can be considered as the geometric analogue of the Swan conductor
of an adjoint Langlands parameter.

\subsubsection{An inequality for the adjoint irregularity} 

We are now ready to state our first main result, which is a geometric analogue of Conjecture \ref{grconj}. 

\begin{thm} \label{t:main} Let $G$ be a simple group, and let
  $(\cE,\nabla)$ be an irregular singular formal flat $G$-bundle. Then
  $\Irr(\Ad(\n)) \geq \rank(G)$.
  \end{thm}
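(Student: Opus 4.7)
The plan is to pull $\nabla$ back to a sufficiently ramified cover $\cDb$, put it into a Cartan-valued Levelt--Turrittin normal form, compute $\Irr(\Ad\nabla)$ explicitly as a sum of pole orders of roots applied to the formal type, and reduce the theorem to a refined minimum-slope inequality handled by induction on the semisimple rank.

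After pulling back along $u=t^{1/b}$ and applying an appropriate formal gauge transformation, the Levelt--Turrittin / Babbitt--Varadarajan theorem puts the connection in the form
\[
\nabla|_{\cDb}=d+\bigl(\phi(u)+N(u)\bigr)\frac{du}{u},
\]
with $\phi(u)$ a Cartan-valued Laurent polynomial of pole order $a$ (so $s=a/b$) and $N(u)$ a commuting nilpotent correction. Fixing a Cartan $\fh\subset\fg$ containing the image of $\phi$, we decompose $\fg=\fh\oplus\bigoplus_{\alpha\in\Delta}\fg_\alpha$; the induced adjoint connection is trivial on $\fh$ and on each root space $\fg_\alpha$ is rank-one with formal type $\alpha(\phi)$. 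Consequently,
\[
\Irr_{\cDt}(\Ad\nabla)=\frac{1}{b}\sum_{\alpha\in\Delta}v_\alpha,\qquad v_\alpha:=\max\bigl(0,-\ord_u\alpha(\phi)\bigr),
\]
so the theorem becomes the inequality $\sum_{\alpha\in\Delta}v_\alpha\geq b\,\rank(G)$.

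Let $\phi_{-a}\in\fh\setminus\{0\}$ denote the leading coefficient; its centralizer $L\subset G$ is a Levi subgroup with root system $\Delta_L=\{\alpha:\alpha(\phi_{-a})=0\}$. For every $\alpha\notin\Delta_L$, $v_\alpha=a$, contributing $a\,|\Delta\setminus\Delta_L|$ to the sum. For $\alpha\in\Delta_L$, the pole of $\alpha(\phi)$ is determined by the tail $\phi-\phi_{-a}u^{-a}$, which we view as the Cartan-valued formal type of a flat $L$-bundle on $\cDb$; since the central torus of $L$ contributes nothing to the adjoint, induction on the semisimple rank yields $\sum_{\alpha\in\Delta_L}v_\alpha\geq\rank([L,L])$ whenever the $L_{ss}$-piece is irregular. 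Collecting, the theorem reduces to the refined slope bound
\[
s\;\geq\;\frac{\rank(G)-\rank([L,L])}{|\Delta\setminus\Delta_L|}\;=\;\frac{\dim Z(L)^0}{|\Delta\setminus\Delta_L|},
\]
up to careful tracking of the ramification index of the Levi piece.

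When $L=T$ is a maximal torus this is precisely $s\geq 1/h$, the known minimum-slope bound of Frenkel--Gross, Chen--Kamgarpour, and Bremer--Sage (tight for the Frenkel--Gross connections). The main obstacle is the case of a proper Levi $L\neq T$: here one needs a strengthening of the minimum-slope theorem that reflects the Jordan type of the polar part, asserting that any flat $G$-bundle whose polar semisimple part has centralizer Levi $L$ has slope at least $\dim Z(L)^0/|\Delta\setminus\Delta_L|$. I would establish this using the Bremer--Sage theory of fundamental strata, which classifies irregular connections via stable $\bZ/m$-gradings of $\fg$ and associates a natural minimum slope to each graded type; the desired inequality should reduce to a uniform root-system identity matching the codimension of the centralizer to its associated slope. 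Tracking equality throughout forces $L=T$, $s=1/h$, and $\phi-\phi_{-a}u^{-a}$ regular, pinpointing the Frenkel--Gross connections as the unique minimizers and verifying the abstract's second claim.
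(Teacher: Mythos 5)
Your setup is correct and matches the paper's: pass to the Jordan form on $\cDb$ and express the adjoint irregularity as $\frac{1}{b}\sum_{\alpha}v_\alpha$. (Small slip: you write $\Delta$ in several places where you mean $\Phi$ --- the decomposition of $\fg$ and the irregularity sum must run over \emph{all} roots, not just the simple ones.) The difficulty is everything after that. Your plan is to induct through the centralizer Levi $L=Z_G(\phi_{-a})^0$ and invoke a ``refined slope bound'' $s\geq\dim Z(L)^0/|\Phi\setminus\Phi_L|$, but that bound is both unproved and, more seriously, insufficient.

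It is unproved because you only gesture at Bremer--Sage fundamental strata and ``a uniform root-system identity'' without giving an argument. It is insufficient because in the base case of your own induction --- when $v_\alpha=0$ for all $\alpha\in\Phi_L$, which happens for instance whenever the whole polar part of $\phi$ is a scalar multiple of $\phi_{-a}$ --- you need $\frac{a}{b}|\Phi\setminus\Phi_L|\geq\rank(G)$ from the leading term alone, while your refined bound only delivers $\geq\dim Z(L)^0$, which is strictly smaller when $L\neq T$. So the induction does not bypass the hard inequality; it reduces to it. Worse, nothing in your argument uses the hypothesis that $\nabla$ is defined over $\cK$ rather than merely over $\cK_b$, and without that Galois constraint the inequality $\frac{a}{b}N(x)\geq\rank(G)$ is simply false (take $x$ regular and $b>h$). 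Exploiting exactly this constraint is the paper's proof: the Babbitt--Varadarajan descent criterion forces the leading coefficient $x$ to lie in $V(b)$, i.e.\ to be an eigenvector of some $w\in W$ with eigenvalue a primitive $b^{\mathrm{th}}$ root of unity, and a case-by-case theorem about reflection groups (Springer theory plus Kostant's theorem, verified type by type) then gives $N(x)\geq b\cdot\rank(G)$, from which the theorem follows in two lines. You would also need to explain why an induction on ``semisimple rank'' applies when $[L,L]$ is semisimple but not simple, since the theorem fails for non-simple groups (the paper's $\GL_2$ example). In short, the skeleton of your reduction is reasonable, but the actual content --- the rationality constraint $x\in V(b)$ and the reflection-group inequality $N(x)\geq br$ --- is missing, and the substitute you propose would not close the argument even if proved.
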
 
  
  \begin{exam} 
  This inequality is false if $G$ is not simple.  For instance,
  suppose $G=\GL_2$. Note that if $\nabla=d+A\,dt$ with $A\in \gl_2(\cK)$, then $\Ad(\nabla)=d+\Ad(A)dt$. Thus, if we take 
  \[
  \nabla=d+\on{diag}(t^{-1},
  t^{-1})\frac{dt}{t}
  \]
   then $\nabla$ is
  irregular, but $\Ad(\nabla)$ is regular singular; thus,
  $\Irr(\Ad(\nabla))=0$.
  \end{exam} 
  
  Next, we discuss when equality is achieved.

  \subsection{Formal Frenkel-Gross Connections} \label{ss:FG}

  Let $G$ be a simple complex algebraic group with Lie algebra $\fg$. 
 Let us fix a maximal torus and a Borel subgroup $H\subset
B\subset G$.  Let $\Phi$ and $\Delta=\{\a_1,\dots\a_n\}$ be the
corresponding sets of roots and simple roots.  Also, let $\a_0$ be the
highest root of $\Phi$.  We denote the root subalgebras of $\fg$ by
$\fu_\a$.  Now, choose nonzero root vectors $x_{-\a_i}\in\fu_{-\a_i}$
and $x_{\a_0}\in\fu_{\a_0}$.  Note that $N=\sum_{i=1}^n x_{-\a_i}$ is
principal nilpotent.  The global Frenkel-Gross connection associated to these root vectors is the
connection on the trivial bundle over $\bP^1$ defined by

\begin{equation}\label{eq:FG}
d+(x_{\a_0}t^{-1}+\sum_{i=1}^n x_{-\a_i})\dtt.
\end{equation} 

This connection has a regular singularity at $\infty$ and an irregular
singularity at $0$. (The connection in \cite{FG} has the locations of
the irregular and irregular singularities reversed.) It is the de Rham
analogue of the Kloosterman sheaf; in particular, it is
cohomologically rigid.

We define a formal Frenkel-Gross connection to be one which is
isomorphic to the induced formal connection at $0$ of the global
Frenkel-Gross connection:

\begin{defe}  A formal flat $G$-bundle is called a formal Frenkel-Gross
  connection if it is isomorphic to $d+(x_{\a_0}t^{-1}+\sum_{i=1}^n
  x_{-\a_i})\dtt$ for some choice of nonzero vectors
  $x_{-\a_i}\in\fu_{-\a_i}$ and $x_{\a_0}\in\fu_{\a_0}$.
\end{defe}

A priori, we have many different formal Frenkel-Gross connections as
we can multiply each $x_{-a_i}$ and $x_{\a_0}$ by nonzero complex
numbers. However, as we shall see, it is sufficient to fix one such
$(n+1)$-tuple and multiply it by nonzero scalars; see Example
\ref{Cox}.  More precisely, let $S$ be the connected centralizer of
the regular element $x_{\a_0}t^{-1}+\sum_{i=1}^n x_{-\a_i}$; it is a
\emph{Coxeter torus} (see \S\ref{ss:toral}).  The relative Weyl group
$W_S=N(S)/S$ is a cyclic group of order $h'$ dividing $h$.  We will
show that any formal Frenkel-Gross connection is isomorphic to
$d+\lambda(x_{\a_0}t^{-1}+\sum_{i=1}^n x_{-\a_i})\dtt$ for some
$\lambda\in\bC^*$.  Moreover, the connections associated to $\lambda$
and $\lambda'$ are isomorphic if and only if
$\lambda'/\lambda\in \mu_{h'}$, the $h'^{\mathrm{th}}$ roots of unity.
In other words, the set of isomorphism classes of formal Frenkel-Gross
connections is isomorphic to $\bC^*/\mu_{h'}$.  (Of course, this space
is homeomorphic to $\bC^*$.  To get a set of representatives indexed
by $\bC^*$, one fixes the $x_{-\a_i}$'s and multiplies $x_{\a_0}$ by a
constant.)

We are now ready to state the companion result to Theorem \ref{t:main}: 
\begin{thm} \label{t:main2} Let $G$ be a simple group, and let
  $(\cE,\nabla)$ be an  formal flat $G$-bundle. Then
 the following are equivalent:
\begin{enumerate}
\item $\Irr(\Ad(\n))=\rank(G)$;
\item $\slope(\n)=\frac{1}{h}$; 
\item $\nabla$ is a formal Frenkel-Gross connection.
\end{enumerate}
\end{thm}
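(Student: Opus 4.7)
The plan is to establish the implications $(3) \Rightarrow (2)$, $(2) \Rightarrow (1)$, $(1) \Rightarrow (2)$, and $(2) \Rightarrow (3)$. The first two are essentially computational; $(1) \Rightarrow (2)$ is a consequence of inspecting the equality case in the proof of Theorem~\ref{t:main}; and $(2) \Rightarrow (3)$ is the main obstacle.

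For $(3) \Rightarrow (2)$: the element $N' := x_{\a_0} + \sum_i x_{-\a_i}$ of $\fg$ is a regular semisimple Kostant--Coxeter element (as one checks directly in $\Sl_n$, and in general via Kostant's principal $\Sl_2$ theory). Pulling back a Frenkel-Gross connection along $u = t^{1/h}$ and gauge-transforming by $\check{\rho}(u) = u^{\check{\rho}}$ --- where $\check{\rho} = \sum_i \check{\omega}_i$ is the sum of the fundamental coweights, so that $\a_i(\check{\rho}) = 1$ for simple $\a_i$ and $\a_0(\check{\rho}) = h - 1$ --- produces a connection whose matrix has leading term $h N' u^{-1}\duu$. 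Since $N'$ is regular semisimple, $\slope(\n) = 1/h$. For $(2) \Rightarrow (1)$: if $\slope(\n) = 1/h$, the $h$-fold pullback admits a trivialization in which the leading term is $X u^{-1}\duu$ with $X \in \fg$ non-nilpotent, and minimality of the slope (cf.~\cite{FG, CK, BremerSagemink}) forces $X$ to be regular semisimple. Diagonalizing $X$ decomposes $f^*\Ad(\n)$ into a Cartan summand (of slope $0$) together with one rank-one summand of slope $1$ for each root $\a \in \Phi$ (because $\a(X) \neq 0$). Hence $\Irr(f^*\Ad(\n)) = \#\Phi = nh$, and since irregularity multiplies by the cover degree, $\Irr(\Ad(\n)) = n = \rank(G)$.

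For $(1) \Rightarrow (2)$, I would inspect the equality case in the proof of Theorem~\ref{t:main}. Schematically, that argument expresses $\Irr(\Ad(\n))$ as a sum of a ``principal'' contribution $s \cdot \#\{\a \in \Phi : \a(X_s) \neq 0\}$ from the root spaces on which the semisimple leading part $X_s$ acts nontrivially, plus a recursive irregularity from the induced flat bundle on the Levi $Z_G(X_s)^\circ$. Equality with $\rank(G) = n$ forces the structure to be minimal: $X_s$ is regular semisimple, the Levi is a maximal torus, the residual contribution vanishes, and $s \cdot \#\Phi = s \cdot n h = n$ pins the slope at $1/h$.

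The main obstacle is $(2) \Rightarrow (3)$. Here the plan is to use the Bremer-Sage fundamental-stratum framework: any slope $1/h$ formal flat $G$-bundle contains a \emph{regular} fundamental stratum of depth $1/h$ supported on the Iwahori subgroup, and the semisimple part of its graded element lies in the Coxeter torus $S$. The classification of such regular Coxeter strata up to loop-group gauge equivalence (an affine analogue of the Kostant section) then shows that, after a loop-group gauge transformation, $\n$ can be put in the form $d + (x_{\a_0} t^{-1} + \sum_i \lambda_i x_{-\a_i})\dtt$ for some $\lambda_i \in \bC^*$. A final $H$-valued gauge transformation absorbs the $\lambda_i$'s into the $x_{-\a_i}$'s, producing the Frenkel-Gross normal form; the residual ambiguity by the relative Weyl group $W_S = \mu_{h'}$ recovers the parameter space $\bC^*/\mu_{h'}$ described above.
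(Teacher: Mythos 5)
Your overall architecture is close to the paper's: Jordan (Levelt--Turrittin) form for $(2)\Rightarrow(1)$, fundamental strata and the regular-strata classification (Theorem~\ref{formaltypes}) for $(2)\Rightarrow(3)$, and deferral to Theorem~\ref{t:main} for $(1)\Rightarrow(2)$. Your direct pullback-and-$\check\rho$-gauge computation for $(3)\Rightarrow(2)$ is a legitimate alternative to the paper's stratum count; it is essentially the original Frenkel--Gross derivation, which the paper explicitly acknowledges. Nevertheless, the proposal has a real gap at its crux, and it surfaces in both $(2)\Rightarrow(1)$ and $(2)\Rightarrow(3)$.

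In $(2)\Rightarrow(1)$ you assert that ``minimality of the slope forces $X$ to be regular semisimple,'' and in $(2)\Rightarrow(3)$ you assert without argument that any slope-$1/h$ connection contains a \emph{regular} fundamental stratum supported on the Iwahori. Neither is obvious: the Jordan form only hands you a semisimple, non-nilpotent leading coefficient $x\in\fh$, and a fundamental stratum of depth $1/h$ could a priori be based at some other point of the fundamental alcove. Regularity of the leading term is precisely the content to be proved, and the paper supplies it in two ways. One is a dedicated lemma showing that the barycenter $x_I$ is the \emph{only} point of the fundamental alcove supporting a fundamental stratum of depth $1/h$, followed by the observation that each of the $n+1$ components $y_0, y_1,\dots,y_n$ of the graded leading term must be nonzero (else it is nilpotent). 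The other, and the one actually driving Theorem~\ref{t:main} and hence $(1)\Rightarrow(2)$, is the key Weyl-group result (Theorem~\ref{t:reflection}): for nonzero $x$ in the eigenspace locus $V(b)$ one has $N(x)=|\{\a\in\Phi : \a(x)\ne 0\}| \geq br$, with equality if and only if $b=h$ and $x$ is a regular Coxeter eigenvector. For slope $1/h$ the leading coefficient lies in $V(h)$, forcing $N(x)=|\Phi|$, i.e.\ $x$ regular. That theorem requires a case-by-case analysis over the Dynkin types and is the technical heart of the paper; it appears nowhere in your proposal. Relatedly, your schematic account of the proof of Theorem~\ref{t:main} via a recursion over Levis $Z_G(X_s)^\circ$ is not what the paper does: the actual argument is the single inequality chain $\Irr(\Ad(\n))\geq N(x)k/b\geq N(x)/b\geq r$ read off the Jordan form, with the last step supplied by Theorem~\ref{t:reflection}; equality forces $k=1$ and $N(x)=br$, hence $b=h$. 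Until the regularity of the leading term (equivalently, the Weyl group bound or the barycenter uniqueness lemma) is established, the implications $(2)\Rightarrow(1)$ and $(2)\Rightarrow(3)$ as written do not close.
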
 

We remark that the proofs of the two theorems use quite different
methods.  The proof of Theorem \ref{t:main} makes use of the classical
Levelt-Turritin theory of Jordan forms for formal flat $G$-bundles.
This allows us to translate the desired inequality into a statement
about eigenvectors of elements of finite real reflection groups; see
also \cite{Masoud}. We check this explicitly for each type.  The
statement makes sense in the context of complex reflection groups as
well, and we conjecture that it holds in general.  In contrast, our
proof of Theorem \ref{t:main2} requires non-classical methods.
Indeed, the proof uses the full power of the geometric theory of
fundamental and regular strata for formal flat $G$-bundles developed
in \cite{BremerSagemink,BremerSagereg}.

\subsection{A de Rham analogue of a conjecture of Heinloth, Ng\^o, and Yun} 
In \cite[\S 7.1]{HNY}, the authors have conjectured that the
Kloosterman sheaves are the only $\ell$-adic local systems on
$\mathbb{P}^1\setminus\{0,\infty\}$ with certain prescribed behaviour
at the marked points. They also gave an analogous construction of
Kloosterman $\mathrm{D}$-modules and conjectured that they were the
same as global Frenkel-Gross connections.  This was subsequently
proved by Zhu~\cite{Zhu}.  Accordingly, one can translate the
conjecture of Heinloth, Ng\^o, and Yun on Kloosterman sheaves into the de Rham setting as follows:

\begin{conj}[De Rham analogue of Conjecture 7.2 of \cite{HNY}] \label{c:HNY}
Let $\nabla$ be a $G$-connection on $\mathbb{P}^1$ which is regular away from $\{0,\infty\}$ and satisfies 
\begin{itemize} 
\item $\nabla_\infty$ is regular singular  with principal unipotent monodromy.
\item $\nabla_0$ is  irregular with irregularity equal to $r=\rank(G)$.
\end{itemize} 
Then $\nabla$ is isomorphic to a global Frenkel-Gross connection. 
\end{conj}

\begin{thm} Conjecture \ref{c:HNY} holds for $G=\SL_n$. 
\end{thm} 

\begin{proof} By Theorem \ref{t:main2}, there exists a (global)
  Frenkel-Gross $\SL_n$-connection $\nabla^\FG$ such that
  $\nabla_0\simeq \nabla_0^\mathrm{FG}$. Moreover, since $\nabla$ and
  $\nabla^\FG$ are both regular singular at $\infty$ and have the same
  monodromy, we have $\nabla_\infty\simeq \nabla^\FG_\infty$.
  According to \cite{FG}, the Frenkel-Gross connection $\nabla^\FG$ is
  cohomologically rigid. In fact, Katz's rigidity theorem
  \cite[Theorem 3.7.3]{Katzbook} shows that $\nabla^\FG$ is also
  cohomologically rigid when viewed as a $\GL_n$-connection.  To apply
  this theorem, we need to observe that $\nabla^\FG$ is irreducible as
  a $\GL_n$-connection and that $\chi(\Gm, \nabla^\FG )=-1$, where
  $\chi$ is the Euler characteristic.  The first fact is easy; it is
  even true of the localization $\nabla_0^\FG$.  The second follows
  from Deligne's formula for the Euler characteristic~\cite[\S 6.19,
  (6.21.1)]{Deligne}:

  \begin{equation*}\chi(\Gm, \nabla^\FG )=\chi(U)\rk(\nabla^\FG
  )-\Irr(\nabla_0^\FG)-\Irr(\nabla_\infty^\FG)=0-1-0=-1.
\end{equation*} 

  By the main result of \cite{BE}, $\nabla^\FG$ is physically rigid
  when viewed as a $\GL_n$-connection. This implies that there exists
  $g\in \GL_n(\bC[t,t^{-1}])$ such that $g.\nabla=\nabla^\FG$, where
  the action is gauge transformation as in \eqref{gauge}. It remains
  to show that $g$ can be chosen to be in $\SL_n$.  To this end, let
  us write
\[
g=z^{-1}g',\quad \quad g'\in \SL_n(\bC[t,t^{-1}]),\quad z=\mathrm{diag}(\det(g)^{-1}, 1, 1, ... , 1).
\]
 Then, we have 
\[
g'.\nabla=z.\nabla^\FG=\nabla^\FG-(dz)z^{-1}\implies g'.\nabla-\nabla^\FG=-(dz)z^{-1}. 
\]
Now, observe that the matrix of the connection $g'.\nabla-\nabla^\FG$
is traceless. This means that the $(dz)z^{-1}$ is traceless and
diagonal, hence zero.   Thus, $g'.\nabla=\nabla^\FG$, so $\nabla$ and $\nabla^\FG$ are equivalent as $\SL_n$-connections.
\end{proof} 

\subsection{Further directions}

We observe that our characterization of the formal Frenkel-Gross
connection makes explicit the notion that it should be viewed as the
geometric version of the simple wild parameters of Gross and Reeder.
This perspective also suggests a potential generalization of the
results of this paper.  Reeder and Yu have constructed
\emph{epipelagic representations} of $p$-adic groups, certain
supercuspidal representations which generalize Gross and Reeder's
simple supercuspidals~\cite{RY}.  This theory has been used by Yun in
his construction of \emph{generalized Kloosterman
  sheaves}~\cite{Yun}.  Reeder has very recently shown that
the corresponding epipelagic Langlands parameters also are the
parameters for which equality holds in a certain inequality involving
the Swan conductor~\cite{R18}.

The theory of regular strata suggests that the geometric analogue of
epipelagic parameters are elliptic toral connections with minimal
(positive) slope.  In fact, regular strata can also be used to
construct de Rham analogues of generalized Kloosterman sheaves, whose
irregular singularity is a formal connection of such a type.  (Yun has
also accomplished this through his notion of
\emph{$\theta$-connections}, cf. \cite{Chen}.)  We expect that these
formal connections can be characterized as those connections for which
equality holds in a more complicated inequality involving the adjoint
irregularity.  We will consider this issue in a future paper.

\subsection{Organization and Notation} 

In \S \ref{s:connections}, we review the Jordan
form (aka the Levelt-Turrittin form) of a formal flat $G$-bundles.
Using this, we give alternative definitions of slope and irregularity.
Properties of formal Frenkel-Gross connections are established in \S
\ref{s:fg}. We use the theory of fundamental strata for formal
connections \cite{BremerSagereg, BremerSagemink} to prove that every
connection with slope $1/h$ is a formal Frenkel-Gross connection. In
\S \ref{s:Weyl}, we prove a result about Weyl groups which will be
crucial for our main theorems. Finally, we prove Theorem \ref{t:main}
and \ref{t:main2} in \S \ref{s:Proof}.

Throughout the paper, $G$ denotes a connected complex reductive group
with Lie algebra $\fg$; unless otherwise specified, we assume that $G$
is simple.  We fix a Borel subgroup $B$ with maximal torus $H$ and
unipotent radical $N$; the corresponding Lie algebras are denoted by
$\fb$, $\fh$, and $\fn$.  As in \S\ref{ss:FG}, $\Phi$ and
$\Delta=\{\a_1,\dots\a_n\}$ are the corresponding sets of roots and
simple roots, and $W$ is the Weyl group. When $\Phi$ is irreducible,
we let $\a_0$ be the highest root of $\Phi$.  We denote the root
subalgebras of $\fg$ by $\fu_\a$.

\subsection{Acknowledgements} We would like to thank Chris Bremer,
Tsao-Hsien Chen, and Dick Gross for helpful conversations regarding
this project.  The first author would like to thank Jim Humphreys, Gus
Lehrer, Peter McNamara, Chul-hee Lee, Daniel Sage, and Ole Warnaar for
helpful discussions regarding reflection groups. MK is supported by
the Australia Research Council DECRA Fellowship. DS is supported by an
NSF Grant and a Simons Collaboration Grant.

\section{Jordan decomposition for formal connections}\label{s:connections}
In this section, we recall some basic facts regarding formal flat
$G$-bundles. Here, $G$ is not assumed to be simple.

\subsection{Jordan decomposition} For each positive integer $b$,
let $\cK_b=\bC(\!(t^{\frac{1}{b}})\!)$ denote the unique finite extension
of $\cK$ of degree $b$ and let $\cDt_b$ denote the corresponding
punctured disk. Let $\pi_b: \cDt_b\ra \cDt$ denote the canonical
covering map. Given a flat $G$-bundle $(\cE,\nabla)$, we denote by
$(\pi_b^*\cE, \pi_b^*\nabla)$ its pullback to $\cDt_b$. For ease of
notation, we sometimes use the substitution $u=t^{\frac{1}{b}}$.

\begin{thm}\label{t:Jordan} Let $(\cE,\nabla)$ be a formal flat $G$-bundle. Then there exists a positive integer $b$ and a trivialization of $\pi_b^*\cE$ in which $\pi_b^*\nabla$ can be written as 
 \begin{equation}\label{eq:Jordan}
\pi_b^*\nabla=d+(h+n)\frac{du}{u}, \quad \quad h\in \fh[u^{-1}], \quad n\in \fn(\bC),  
\end{equation}
and $h$ and $n$ commute. Moreover, the pair $h$ and $n$ satisfying the above properties is unique. 
\end{thm}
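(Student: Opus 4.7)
I would split into existence and uniqueness. Existence reduces to a Levelt--Turrittin theorem for formal $G$-connections, and uniqueness follows from the uniqueness of the additive Jordan decomposition in $\fg(\cK_b)$.

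\textbf{Existence.} The Levelt--Turrittin theorem for formal $G$-bundles (in the form developed by Babbitt--Varadarajan and revisited in \cite{BremerSagemink}) produces a positive integer $b$ and a trivialization of $\pi_b^*\cE$ in which $\pi_b^*\nabla = d + X\,\duu$, where $X = h_1 + A_0$ with $h_1 \in \fh[u^{-1}]$ having zero constant term and $A_0 \in \fg$ a constant element commuting with $h_1$. The centralizer $\fl := \fg^{h_1}$ is a Levi subalgebra of $\fg$ containing $\fh$, and $A_0 \in \fl$. Writing $A_0 = s + n_0$ for its additive Jordan decomposition inside $\fl$, I would apply a constant gauge by an element of the Levi subgroup $L$ to conjugate $s$ into $\fh$ (while preserving $h_1$), followed by a further constant gauge by an element of $C_L(s)$ to conjugate the transformed $n_0$ into $\fn \cap \fg^s$ (preserving both $h_1$ and $s$). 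Setting $h := h_1 + s \in \fh[u^{-1}]$ and taking $n$ to be the final nilpotent yields $\pi_b^*\nabla = d + (h+n)\duu$; the commutation $[h,n]=0$ follows because $n$ lies in $\fg^{h_1} \cap \fg^s$.

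\textbf{Uniqueness.} Suppose $A := h + n$ is of the prescribed form. Viewing $\fg(\cK_b)$ as a reductive Lie algebra over $\cK_b$, the $\fh$-weight decomposition $\fg(\cK_b) = \fh(\cK_b) \oplus \bigoplus_\alpha \fu_\alpha(\cK_b)$ diagonalizes $\ad h$, with eigenvalue $\alpha(h) \in \cK_b$ on $\fu_\alpha(\cK_b)$, so $h$ is ad-semisimple. Meanwhile $n \in \fn$ is ad-nilpotent, and $[h,n]=0$ by hypothesis. Therefore $h + n$ \emph{is} the additive Jordan decomposition of $A$ in $\fg(\cK_b)$. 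Uniqueness of Jordan decomposition in a reductive Lie algebra over a characteristic-zero field then forces $(h,n)$ to be determined by $A$ alone.

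\textbf{Main obstacle.} The substantive content is concentrated in the appeal to the $G$-bundle Levelt--Turrittin theorem producing the preliminary form $h_1 + A_0$. A naive approach applying classical $\GL_n$ Levelt--Turrittin to a faithful representation of $G$ would yield a decomposition into irregular rank-one pieces tensored with regular singular bundles, but it would not canonically place the polar part of the connection matrix inside $\fh$ compatibly with the $G$-structure; this is exactly what the $G$-valued theorem achieves. The subsequent Levi-theoretic normalization of $A_0$ and the Jordan-decomposition argument for uniqueness are then routine applications of structure theory for reductive Lie algebras.
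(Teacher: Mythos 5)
The paper does not prove Theorem~\ref{t:Jordan} at all: it records the result as a citation, attributing existence for $\GL_n$ to Turrittin, uniqueness for $\GL_n$ to Levelt, and the general reductive case to Babbitt--Varadarajan~\cite[\S 9]{BV}. Your existence sketch is, in substance, the same appeal to Babbitt--Varadarajan, together with a straightforward Levi-theoretic cleanup of the constant term; there is nothing wrong with that step, but it offers no independent content beyond the cited reference, and the ``intermediate form'' $h_1+A_0$ you invoke is not obviously a weaker input than the Jordan form itself (you would need to check that \cite{BV} actually delivers that intermediate form as a genuinely prior stage).

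The uniqueness argument, however, has a real gap. You show that if $A=h+n$ with $h\in\fh[u^{-1}]$ ad-semisimple, $n\in\fn$ ad-nilpotent, and $[h,n]=0$, then $(h,n)$ is the additive Jordan decomposition of $A$ in $\fg(\cK_b)$, hence determined by $A$. That is correct, but it only establishes uniqueness of $(h,n)$ \emph{once the trivialization, and hence the matrix $A$, is fixed}. The content of the theorem (this is precisely Levelt's contribution in the $\GL_n$ case) is uniqueness across trivializations: if two trivializations of $\pi_b^*\cE$ (or of $\pi_{b'}^*\cE$ for a different $b'$) put $\pi_b^*\nabla$ into Jordan form, the resulting pairs must agree. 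Gauge transformations act on the matrix not merely by conjugation but by $\Ad(g)(A)-(dg)g^{-1}$, so the Jordan decomposition of $A$ in $\fg(\cK_b)$ is not manifestly a gauge invariant, and your argument never confronts the $-(dg)g^{-1}$ term. To close this you would need to show that any gauge transformation carrying one Jordan form to another is of a very restricted shape (constant, normalizing $H$, etc.), which is the actual work in Levelt's theorem and in \cite[\S 9]{BV}.
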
 

\begin{defe} 
  We call $d+(h+n)\frac{du}{u}$ the \emph{Jordan form} of the formal
  flat connection $(\cE,\nabla)$.
\end{defe} 

For $G=\GL_n$, the existence of the trivialization with the properties
in the above theorem was first proved by Turrittin \cite{Turrittin}.
Subsequently, Levelt proved uniqueness \cite{Levelt}.  Babbitt and
Varadarajan \cite{BV} have given an alternative proof of this fact. In
addition, following a suggestion of Deligne, they showed that the
above theorem also holds for $G$ an arbitrary reductive group \cite[\S
9]{BV}.

 \subsection{Slope} Using the Jordan decomposition, we can give an
 alternative definition of the slope of a formal flat $G$-bundle.  If
 $z$ is a Laurent series, we let $\ord_\pole(z)\in \bZ_{\geq 0}$
 denote the order of the pole of $z$. Clearly, $z$ is a power series
 (i.e., has no singularity) if and only if $\ord_\pole(z)=0$.

Let $(\cE,\nabla)$ be a formal flat $G$-bundle and let
  $d+(h+n)\frac{du}{u}$ denote its Jordan form, where $u=t^{\frac{1}{b}}$ for some $b\in \bZ_{\geq 1}$.
  
  \begin{defe}  The non-negative
  rational number $s=\max\{0,\frac{\ord_{\pole}(h)}{b}\}$ is called
  the \emph{slope} of $\nabla$.
\end{defe}

It is immediate that this definition coincides with the one given in
the introduction when $h\ne 0$, since the leading term of $h+n$ is
evidently non-nilpotent.  If $h=0$, then $\n$ is regular singular, and
both definitions give $0$.  We will also need an equivalent definition
of the slope given in terms of fundamental
strata~\cite{BremerSagemink}; this will be discussed in \S\ref{s:fg}.  For yet
another equivalent definition, see \cite[\S 2]{CK}.

\subsection{Irregularity} Let $G=\GL_n$, and let $B$ (resp. $H$)
denote the upper triangular (resp. diagonal) matrices.  Let
$(\cE,\nabla)$ be a formal flat $G$-bundle (equivalently, a vector
bundle on $\cDt$ equipped with a connection). Let
$d+(h+n)\frac{du}{u}$ denote its Jordan form with respect to the above
choice of $B$ and $H$. (Recall that $u=t^{\frac{1}{b}}$ for some
$b\in \bZ_{\geq 1}$.)  Since $h$ is diagonal, we can write
  \[
  h=\on{diag}(h_1,\cdots, h_n),\quad \quad h_i\in \bC[u^{-1}].
  \]

\begin{defe}  The irregularity of $(\cE,\nabla)$ is defined by
\[
 \Irr(\n)=\sum_{i=1}^n \{\max\{0,\frac{\ord_{\pole}(h_i)}{b}\}\}.
 \]
\end{defe} 

One can show that $\Irr(\nabla)\in \bZ_{\ge 0}$ and that this integer coincides with the one defined in the introduction.

\section{Irregular connections with minimum
  slope}\label{s:fg}

In \S\ref{ss:FG}, we introduced the notion
of a formal Frenkel-Gross connection.  In this section, we will
characterize these connections as the irregular flat $G$-bundles on
$\cDt$ with minimum slope, i.e., with slope $1/h$, where $h$ is the
Coxeter number of $G$. 

Recall from \S\ref{ss:FG} that the connected centralizer of the matrix
of a Frenkel-Gross connection is a maximal torus of $G(\cK)$ called a
Coxeter torus; its relative Weyl group is a cyclic group of order
$h'$ dividing $h$.

\begin{prop} \label{p:FG}
For a flat $G$-bundle $\nabla=(\cE,\nabla)$ the
following are equivalent: 
\begin{enumerate} 
\item[(i)] $\nabla$ is  isomorphic to a Frenkel-Gross connection;
\item[(ii)] The slope of $\nabla$ equals $\frac{1}{h}$. 
\end{enumerate} 
Moreover, the set of isomorphism classes of
such flat $G$-bundles is in bijection with $\C^*/\mu_{h'}$.  Finally, for each such connection, $\Irr(\Ad(\n))=\rank(G)$.
\end{prop}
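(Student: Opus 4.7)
The plan breaks into three steps.

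First, for (i)$\Rightarrow$(ii) together with the irregularity computation, I would proceed by direct calculation of the Jordan form. Given $\nabla = d+(x_{\alpha_0}t^{-1}+N)\frac{dt}{t}$ with $N=\sum_i x_{-\alpha_i}$, pull back via $u=t^{1/h}$ and gauge by $u^{\rho^\vee}$. Using the standard identities $\langle\rho^\vee,\alpha_0\rangle=h-1$ and $\langle\rho^\vee,\alpha_i\rangle=1$, the transformed matrix becomes $(hu^{-1}Y-\rho^\vee)\frac{du}{u}$, where $Y:=x_{\alpha_0}+N\in\fg$ is regular semisimple by a theorem of Kostant. A further constant gauge conjugates $Y$ into $\fh$, so the Jordan-form $h$-part has a simple pole in $u$, giving slope $1/h$. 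Moreover, no nonzero element of $\fn$ commutes with a regular element of $\fh$, so the Jordan-form $n$-part must vanish. Hence the adjoint connection splits as $\fg=\fh\oplus\bigoplus_{\alpha\in\Phi}\fu_\alpha$, with each $\fu_\alpha$ of slope $1/h$ (its leading eigenvalue is nonzero by regularity of the Cartan element) and the Cartan part regular singular; summing gives $\Irr(\Ad\nabla)=|\Phi|/h=\rank(G)$, using the identity $|\Phi|=h\cdot\rank(G)$.

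Second, for (ii)$\Rightarrow$(i) I would invoke the theory of fundamental strata from \cite{BremerSagereg,BremerSagemink}: a flat $G$-bundle of slope $1/h$ admits a fundamental stratum based on the Iwahori $I\subset G(\cK)$, whose leading-term datum lies in a graded piece of the Moy-Prasad filtration that decomposes as $t^{-1}\fu_{\alpha_0}\oplus\bigoplus_i\fu_{-\alpha_i}$. The non-nilpotency condition on the stratum forces nonzero components in each summand, yielding a matrix of the form $x_{\alpha_0}t^{-1}+N+(\text{higher order})$, and a standard iterative gauge-elimination argument removes the tail. This step is the main technical obstacle: carefully identifying the relevant stratum and verifying that the higher-order terms can be gauged away is where the bulk of the work sits.

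Third, for the bijection with $\bC^*/\mu_{h'}$: a constant gauge by $H$ acts on the $(n+1)$-tuple of nonzero scalars parameterizing a Frenkel-Gross matrix via the characters $\alpha_0,-\alpha_1,\ldots,-\alpha_n$ of $H$; these satisfy the single linear relation coming from $\alpha_0=\sum m_i\alpha_i$, so the $H$-orbit space is $\bC^*$, and every Frenkel-Gross connection is $H$-gauge equivalent to $\nabla_\lambda = d+\lambda(x_{\alpha_0}^{(0)}t^{-1}+\sum x_{-\alpha_i}^{(0)})\frac{dt}{t}$ for some $\lambda\in\bC^*$. Finally, an isomorphism $\nabla_\lambda\simeq\nabla_{\lambda'}$ via some $g\in G(\cK)$ forces $g$ to normalize the Coxeter torus $S$ (since both matrices lie in $S$), and the induced action on $\lambda$ factors through the relative Weyl group $W_S=N(S)/S\cong\bZ/h'\bZ$, which acts by $h'$-th roots of unity.
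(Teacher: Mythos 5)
Your Step 1 is essentially the Frenkel--Gross route (pullback plus $u^{\rho^\vee}$-gauge), which the paper explicitly mentions as an alternative to its strata-based argument, and it works for the slope. However, you should be careful with the irregularity: after conjugating $Y=x_{\a_0}+N$ into $\fh$ by a constant $k\in G$, the term $\rho^\vee$ becomes $\Ad(k)(\rho^\vee)$, which does \emph{not} commute with $\Ad(k)(Y)$ (indeed $[\rho^\vee,Y]\neq 0$) and in particular does not lie in $\fh$, so the matrix $(hu^{-1}\Ad(k)Y-\Ad(k)\rho^\vee)\frac{du}{u}$ is \emph{not} yet in Jordan form. One needs a further (non-constant) formal gauge transformation using the splitting lemma, exploiting regularity of the leading term, to reach a connection valued in $\fh$; only then does the weight-space decomposition of $\fg$ give $\Irr=|\Phi|/h$. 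The paper packages this cleanly as a general statement about $S$-toral connections (its first lemma).

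Step 2 is where the real gap lies. You assert that a slope-$1/h$ connection ``admits a fundamental stratum based on the Iwahori'', but the general theory only gives a fundamental stratum $(x,1/h,\b)$ at \emph{some} $x$ in the closed fundamental alcove. Showing that $x$ must be the barycenter $x_I$ is a genuine lemma (the paper's second lemma), proved by a careful analysis of which graded pieces $\fg_x(-1/h)$ can contain non-nilpotent elements; without it, the leading term need not have the Frenkel--Gross shape. Moreover, describing the elimination of higher-order terms as a ``standard iterative gauge-elimination'' undersells the issue: the paper invokes Theorem~\ref{formaltypes} (the $S$-toral formal type theorem of \cite{BremerSagereg}) to push $[\n]$ into $(\fs(-1/h)\oplus\fs(0))\dtt$, and then uses that $\fs(0)=\{0\}$ for an elliptic torus in a simple group. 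This is substantive, not a routine iteration.

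For Step 3, your claim that a gauge isomorphism $\nabla_\lambda\simeq\nabla_{\lambda'}$ must be implemented by an element normalizing $S$ is not immediate in the gauge (as opposed to adjoint) setting, because of the $-dg\cdot g^{-1}$ term. The paper sidesteps this by deducing the bijection directly from Theorem~\ref{formaltypes}: the moduli of $S$-toral connections of slope $1/h$ is $\cA(S,1/h)/\Waff_S$, and since $\fs(0)=0$ the translation part of $\Waff_S$ acts trivially, $\fs(-1/h)$ is one-dimensional, and $W_S\cong\mu_{h'}$ acts freely, giving $\bC^*/\mu_{h'}$. Your $H$-orbit reduction to a single scalar $\lambda$ is also slightly delicate for non-adjoint $G$ (the characters $\a_0,-\a_1,\dots,-\a_n$ need not map $H$ onto the full kernel torus), so I'd advise either restricting to the root datum carefully or, better, adopting the formal-type description as the paper does.
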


In order to prove the proposition, we will need to recall some of the
geometric theory of fundamental strata from ~\cite{BremerSagemink,
  BremerSagereg}.

\subsection{Fundamental strata}

 Let $\cB$ be the Bruhat-Tits building of $G$; it is
a simplicial complex whose facets are in bijective correspondence with
the parahoric subgroups of the loop group $G(\cK)$.  The standard apartment $\cA$
associated to the split maximal torus $H(\cK)$ is an affine space
isomorphic to $X_*(H)\otimes_\bZ \bR$.  Given $x\in\cB$, we denote by
$G(\cK)_x$ (resp. $\fg(\cK)_x$) the parahoric subgroup (resp.
subalgebra) corresponding to the facet containing $x$.

For any $x\in\cB$, the Moy-Prasad filtration associated to $x$ is a
decreasing $\bR$-filtration 
\[
\{\fg(\cK)_{x,r}\mid
r\in\bR\}
\] of $\fg(\cK)$ by $\C[[t]]$-lattices.  The filtration satisfies
$\fg(\cK)_{x,0}=\fg(\cK)_x$ and is periodic in the sense that
$\fg(\cK)_{x,r+1}=t\fg(\cK)_{x,r}$.  Moreover, if we set
$\fg(\cK)_{x,r+}=\bigcup_{s>r}\fg(\cK)_{x,s}$, then the set of $r$ for
which $\fg(\cK)_{x,r}\ne\fg(\cK)_{x,r+}$ is a discrete subset of
$\bR$.

For our purposes, it will suffice to give the explicit definition for
$x\in\cA$.  In this case, the filtration is determined by a grading on
$\fg(\bC[t,t^{-1}])$, with the graded subspaces given by
\begin{equation*}
\fg(\cK)_{x}(r)=\begin{cases} \fh t^r\oplus
  \bigoplus\limits_{\a(x)+m=r}\fu_\a t^m, &\text{if } r\in\bZ\\\\
\bigoplus\limits_{\a(x)+m=r}\fu_\a t^m,&\text{otherwise.}
\end{cases}
\end{equation*}

Let $\kappa$ be the Killing form for $\fg$.  Any element $X\in
\fg(\cK)$ gives rise to a continuous $\C$-linear functional on
$\fg(\cK)$ via $Y\mapsto \Res\kappa(Y,X)\dtt$.  This identification
induces an isomorphism
\begin{equation*} 
(\fg(\cK)_{x,r}/\fg(\cK)_{x,r+})^\vee\cong \fg(\cK)_{x,-r}/\fg(\cK)_{x,-r+}.
\end{equation*}

The leading term of a connection with respect to a Moy-Prasad
filtration is given in terms of $G$-\emph{strata}.
A $G$-stratum of depth $r$ is a triple $(x,r,\b)$ with $x\in\cB$,
$r\ge 0$, and $\b\in(\fg(\cK)_{x,r}/\fg(\cK)_{x,r+})^\vee$.  Any
element of the corresponding $\fg(\cK)_{x,-r+}$-coset is called a
representative of $\b$.  If $x\in\cA$, there is a unique homogeneous
representative $\b^\flat\in \fg(\cK)_{x}(-r)$.  The stratum is called
\emph{fundamental} if every representative is non-nilpotent.  When
$x\in\cA$, it suffices to check that $\b^\flat$ is non-nilpotent.

\begin{defe} If $x\in\cA\cong\fh_\bR$, we say that $(\cE,\n)$ contains the stratum $(x,r,\b)$
with respect to the trivialization $\phi$ if $[\nabla]_\phi-x\dtt\in
\fg(\cK)_{x,-r}\dtt$ and is a representative for $\b$.  (See
\cite{BremerSagemink} for the general definition.)
\end{defe}

We recall some of the basic facts about the relationship between flat
$G$-bundles and strata.   The following theorem and
  Theorem~\ref{formaltypes} hold for reductive $G$.

\begin{thm}[{\cite[Theorem 2.14]{BremerSagemink}}]
 Every flat $G$-bundle $(\cE, \n)$ contains a fundamental
  stratum $(x,r,\b)$, where $x$ is in the fundamental alcove
  $C\subset\cA$ and $r\in\bQ$; the
  depth $r$ is positive if and only if $(\cE, \n)$ is irregular
  singular.  Moreover, the following statements hold.
\begin{enumerate}
\item If $(\cE, \n)$ contains the stratum $(y,r', \b')$, then
$r' \ge r$.  
\item  If $(\cE,\n)$ is irregular singular, a stratum
  $(y,r', \b')$ contained in $(\cE, \n)$ is fundamental if and only if
  $r' = r$.
\end{enumerate}
\end{thm}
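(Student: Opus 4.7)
The plan is to realize the fundamental stratum as a minimizing object, by minimizing the depth of contained strata over all trivializations and base points. For each trivialization $\phi$ of $\cE$ and each $y\in\cA$, let $d_\phi(y)$ be the smallest $r\ge 0$ such that $[\nt]_\phi - y\cdot\dtt$ lies in the Moy--Prasad piece $\fg(\cK)_{y,-r}$; the image of $[\nt]_\phi - y\cdot\dtt$ in $\fg(\cK)_{y,-r}/\fg(\cK)_{y,-r+}$ is then the homogeneous representative $\b^\flat$ of a stratum $(y,d_\phi(y),\b)$ contained in $(\n,\phi)$. Set $r(\n) = \inf_{\phi,y} d_\phi(y)$. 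The goals are: (i) $r(\n)$ is attained by some $(\phi_0,y_0)$; (ii) $r(\n) \in \bQ_{\ge 0}$; (iii) $y_0$ may be taken in the fundamental alcove $C$; (iv) the attaining stratum is fundamental; (v) $r(\n) = 0$ if and only if $(\cE,\n)$ is regular singular; from (i)--(v) both the main assertion and properties (1), (2) follow.

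For (i)--(iii), observe that the Moy--Prasad breaks at any $y\in\cA$ form a discrete subset of $\tfrac{1}{N_y}\bZ$ and that the depth function $y\mapsto d_\phi(y)$ is constant on each facet of $\cA$; hence $r(\n)$ is a nonnegative rational attained at some $(\phi_0,y_0)$ with $y_0$ in a rational facet. The affine Weyl group $\Waff$ acts transitively on the alcoves of $\cA$, and for each $w\in\Waff$ there is a representative $\dot w$ in the normalizer of $H(\cK)$ in $G(\cK)$. Because the gauge action by $\dot w$ intertwines the Moy--Prasad filtrations at $y_0$ and at $w\cdot y_0$, after replacing $\phi_0$ by $\dot w\cdot\phi_0$ we may assume $y_0\in\overline{C}$; a small perturbation within the facet containing $y_0$ then places the base point in $C$ without increasing the depth.

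Step (iv) is the heart of the argument and the main obstacle. Suppose for contradiction that the minimizing stratum $(x,r,\b)$ has nilpotent representative $\b^\flat\in\fg(\cK)_x(-r)$. By Jacobson--Morozov, embed $\b^\flat$ in an $\Sl_2$-triple $(e,\eta,f)$ in $\fg(\cK)$ with $e=\b^\flat$. Using the compatibility between the $x$-grading on $\fg(\cK)$ and the $\Sl_2$-action, one can arrange the triple so that $\eta\in\fg(\cK)_x(0)$ (the reductive degree-zero part at $x$) and $f\in\fg(\cK)_x(r)$. Then a gauge transformation $g$ built from $f$--concretely $g=\exp(cf)$ for a suitable scalar $c$, possibly followed by iterated corrections--yields a new connection matrix $\Ad(g)([\nt]_\phi)-(dg)g^{-1}$ whose leading term in the $x$-grading has strict degree $>-r$, since $c[f,e]=c\eta\in\fg(\cK)_x(0)$ cancels the degree $-r$ contribution and $(dg)g^{-1}$ contributes only terms of degree $\ge r>-r$. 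The resulting connection therefore contains a stratum of depth strictly less than $r$, contradicting the minimality of $r$. The main technical difficulty is arranging the $\Sl_2$-triple to be graded-compatible with the parahoric filtration at $x$; for $\GL_n$ this is classical Moy--Prasad theory, and for general reductive $G$ at arbitrary parahorics in $\cB$ it is the central technical contribution of Bremer--Sage, requiring careful use of the graded reductive quotients attached to facets of the Bruhat--Tits building.

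For (v) and (vi): if $r=0$ then some trivialization has $[\nt]_\phi\in\fg(\cK)_{x,0}=\fg(\cK)_x$, i.e., the connection has at worst a simple pole with residue in a parahoric, which is the definition of regular singular; conversely, any regular singular connection admits such a normal form, giving a contained stratum of depth $0$. Property (1) is immediate from the definition of $r$ as the infimum. The nontrivial direction of (2) states that $r'=r$ and contained implies fundamental: any contained stratum at depth $r$ with nilpotent leading term would, by the reduction of Step (iv), yield a contained stratum of depth $<r$, violating (1). Conversely, a contained fundamental stratum of depth $r'>r$ would exhibit a non-nilpotent leading term at pole order $r'$, forcing the slope of $\n$ to be at least $r'$, contradicting the identification of $\slope(\n)$ with $r$ via the Jordan form of Theorem~\ref{t:Jordan}.
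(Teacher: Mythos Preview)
The paper does not prove this theorem at all: it is quoted verbatim from \cite[Theorem~2.8]{BremerSagemink} and used as a black box. So there is no ``paper's own proof'' to compare against; what you have written is an attempted sketch of the Bremer--Sage argument itself.

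That said, your sketch has a genuine gap at the key step (iv). You claim that gauging by $g=\exp(cf)$ with $f\in\fg(\cK)_x(r)$ produces a connection matrix whose leading term in the $x$-grading has degree strictly greater than $-r$, ``since $c[f,e]=c\eta\in\fg(\cK)_x(0)$ cancels the degree $-r$ contribution.'' But this is backwards: precisely because $[f,\b^\flat]$ lies in degree $0$, the degree $-r$ component of $\Ad(g)(\b^\flat)$ is still $\b^\flat$ itself. A unipotent gauge by an element of positive $x$-degree only modifies the tail of the expansion, never the leading piece. So $\exp(cf)$ cannot lower the depth at $x$, and the contradiction you seek does not arise.

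The mechanism that actually works is to change the \emph{base point}, not merely the trivialization. Given a nilpotent $\b^\flat$, one passes (via Kempf's instability theorem or directly via the $\Sl_2$-triple) to a destabilizing cocharacter $\lambda$, and then moves from $x$ to $x'=x+\epsilon\lambda$ in the apartment (equivalently, gauges by a torus element such as $\lambda(t^\epsilon)$ for appropriate rational $\epsilon$). Because $\b^\flat$ has strictly negative weight under $\ad(\lambda)$, the Moy--Prasad degree of $\b^\flat$ at $x'$ is strictly greater than $-r$, so the depth of the stratum contained at $x'$ drops. This is the essential content of the Moy--Prasad ``unrefined minimal $K$-type'' argument, transported to connections by Bremer--Sage; it is a building-theoretic move, and your fixed-$x$ unipotent gauge cannot substitute for it.

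A secondary issue: in (i)--(iii) you assert that $y\mapsto d_\phi(y)$ is constant on facets and hence the infimum is attained. The parahoric $\fg(\cK)_y$ is constant on open facets, but the Moy--Prasad filtration $\fg(\cK)_{y,r}$ is not; the jump values $\alpha(y)+\bZ$ move with $y$, so $d_\phi(y)$ varies continuously inside an alcove and the infimum over all $(\phi,y)$ is not a priori attained by a discreteness argument. One needs either an a priori lower bound for the depth (e.g., via the Jordan form and the slope) or a more careful finiteness argument.
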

As a consequence, one can define the slope of $\n$ as the depth of any
fundamental stratum it contains.

As an example, let $x_I$ be the barycenter of the fundamental alcove
in $\cA$, which corresponds to the standard Iwahori subgroup.  It is
immediate from the definition that
$\fg(\cK)_{x_I}(-1/h)=t^{-1}\fu_{\a_0}\oplus\bigoplus_{i=1}^n
\fu_{-\a_i}$.  One now sees that a Frenkel-Gross connection contains a
stratum of the form $(x_I,\frac{1}{h},\b)$.  This stratum is
fundamental; in fact, $\b^\flat$ is regular semisimple. In particular, the slope of a formal Frenkel-Gross connection is $\frac{1}{h}$. 
We note that Frenkel and Gross derived this directly from the definition of slope given in the introduction.

\subsection{Regular strata and toral flat $G$-bundles}\label{ss:toral}

We will also need some results on flat $G$-bundles which contain a
\emph{regular stratum}, a kind of stratum that satisfies a graded
version of regular semisimplicity.  For convenience, we will only
describe the theory for strata based at points in $\cA$.

Let $S\subset G(\cK)$ be a (in general, non-split) maximal torus, and
let $\fs \subset \fg(\cK)$ be the associated Cartan subalgebra.  We
denote the unique Moy-Prasad filtration on $\fs$ by $\{\fs_r\}$.  More
explicitly, we first observe that if $S$ is split, then this is just
the usual degree filtration.  In the general case, if $\cK_b$ is a
splitting field for $S$, then $\fs_r$ consists of the Galois fixed
points of $\fs(\cK_b)_r$.  Note that $\fs_r\ne\fs_{r+}$ implies that
$r\in\bZ\frac{1}{b}$.

A point $x \in \cA$ is called \emph{compatible} with $\fs$ if
$\fs_r=\fg(\cK)_{x,r}\cap \fs$ for all $r\in\bR$.

\begin{defe} A fundamental stratum
  $(x,r,\beta)$ with $x\in\cA$ and $r>0$ is an \emph{$S$-regular stratum} if $x$ is compatible with
  $S$ and $S$ equals the connected centralizer of $\b^\flat$.
\end{defe}
In fact, every representative of $\b$ will be regular semisimple with
connected centralizer a conjugate of $S$.

\begin{defe}  If $(\cE,\n)$ contains the $S$-regular stratum
  $(x,r,\b)$, we say that $(\cE,\n)$ is \emph{$S$-toral}.
\end{defe}

Recall that the conjugacy classes of maximal tori in $G(\cK)$ are in
one-to-one correspondence with conjugacy classes in the Weyl group $W$
\cite{KL88}.  It turns out that there exists an $S$-toral flat
$G$-bundle of slope $r$ if and only if $S$ corresponds to a regular
conjugacy class of $W$ and $e^{2\pi i r}$ is a regular eigenvalue for
this class~\cite[Corollary 4.10]{BremerSagereg}.  Equivalently,
$\fs_{-r}\setminus\fs_{-r+}$ contains a regular semisimple element.
For example, a Frenkel-Gross connection is $S$-toral for $S$ a Coxeter
torus, i.e., a maximal torus corresponding to the Coxeter conjugacy
class in $W$.  (One way to see this is that $e^{2\pi i/h}$ is a
regular eigenvalue for Coxeter elements and for no other elements of
$W$.)  Moreover, since the regular eigenvalues of a Coxeter element
are the primitive $h^\mathrm{th}$ roots of $1$, the possible slopes for
$S$-toral flat $G$-bundles are $m/h$ with $m>0$ relatively prime to
$h$.

An important feature of $S$-toral flat $G$-bundles is that they can be
``diagonalized'' into $\fs$.  To be more precise, suppose that there
exists an $S$-regular stratum of depth $r$.  The filtration on $\fs$
can be defined in terms of a grading, whose graded pieces we denote by
$\fs(r)$.  Let $\cA(S,r)$ be the open subset of
$\bigoplus_{j\in[-r,0]}\fs(j)$ whose leading component (i.e., the
component in $\fs(-r)$) is regular semisimple.  This is called the set
of \emph{$S$-formal types} of depth $r$.  Let $\Waff_S=N(S)/S_0$ be
the relative affine Weyl group of $S$; it is
the semidirect product of the relative Weyl group $W_S$ and the free
abelian group $S/S_0$.  The group $\Waff_S$ acts on $\cA(S,r)$.  The
action of $W_S$ is the restriction of the obvious linear action while
$S/S_0$ acts by translations on $\fs(0)$.
\begin{thm}{\cite[Corollary 5.14]{BremerSagereg}}\label{formaltypes} If $(\cE,\n)$ is $S$-toral of
  slope $r$, then $\n$ is gauge-equivalent to a connection with matrix
  in $\cA(S,r)\dtt$.  Moreover, the moduli space of $S$-toral flat
  $G$-bundles of slope $r$ is given by $\cA(S,r)/\Waff_S$.
\end{thm}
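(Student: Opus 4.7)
The plan is to tackle the two assertions in sequence: first construct an explicit ``diagonalizing'' gauge transformation carrying any $S$-toral connection into $\cA(S,r)\dtt$, and then identify the gauge equivalences among such normal forms with the action of $\Waff_S$.

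For the first assertion, fix a trivialization in which $\n$ contains the $S$-regular stratum $(x,r,\b)$, so that $\nbr_\phi - x\,\dtt$ lies in $\fg(\cK)_x(-r)\dtt$ and its leading component $Z_0 \in \fg(\cK)_x(-r)$ represents $\b$. Compatibility of $x$ with $\fs$ yields a splitting $\fg(\cK)_x(j) = \fs(j) \oplus \fs^{\perp}(j)$ of each graded piece, where the complement is taken with respect to the Killing form extended $\cK$-bilinearly. The $S$-regularity of $(x,r,\b)$ means that after a preliminary conjugation by an element of the reductive quotient of the parahoric $G(\cK)_x$, one may assume $Z_0 \in \fs(-r)$ is regular semisimple with connected centralizer $S$. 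The key structural input is then that $\ad Z_0$ is a graded isomorphism on $\fs^{\perp}$ shifting degree by $-r$; this is the $\cK$-adic avatar of the usual fact that the adjoint action of a regular semisimple element is invertible on the complement of its centralizer, verified by passage to the splitting cover $\cK_b$ and Galois descent.

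Using this invertibility, one kills the $\fs^{\perp}$-part of $\nbr_\phi$ degree by degree. If the lowest degree $j \ge -r$ in which $\nbr_\phi$ still has a nontrivial $\fs^{\perp}$-component $Y_j^{\perp}$ has been identified, solve $[X, Z_0] = -Y_j^{\perp}$ for the unique $X \in \fs^{\perp}(j+r)$ and apply the gauge transformation $\exp(X)$; the Campbell--Hausdorff expansion shows this kills the $\fs^{\perp}$-component in degree $j$ while affecting only strictly higher degrees. Iterating through the finitely many relevant degrees in $(-r, 0]$ disposes of the contribution to the slope, and continuing to positive degrees converges $t$-adically because each successive $X$ lies in a strictly deeper Moy--Prasad piece. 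Composing the gauges produces an element of $G(\cK)$ transforming $\nbr_\phi$ into $\cA(S,r)\dtt$. For the moduli assertion, suppose $a, a' \in \cA(S,r)\dtt$ are gauge-equivalent via $g \in G(\cK)$. Both leading terms are regular semisimple with connected centralizer $S$, forcing $\Ad(g)$ to preserve $\fs$ to leading order; running the diagonalization procedure in reverse allows $g$ to be adjusted within its coset to lie in $N_{G(\cK)}(\fs) = N(S)$. Gauge transformations by $S_0$ act trivially on formal types --- a direct check shows $(dg)g^{-1}$ for $g \in S_0$ contributes only an $\fs$-valued correction compatible with the grading and absorbed into the translation part --- so only the quotient $N(S)/S_0 = \Waff_S$ acts nontrivially, with the finite group $W_S$ by restriction of the linear conjugation action and the lattice $S/S_0$ by translations on the degree-$0$ component.

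The main obstacle is the structural claim underpinning the first part: proving that $\ad Z_0$ is a degree-shifting isomorphism on \emph{all} of $\fs^{\perp}$, including the infinitely many graded pieces of positive degree, and that the iterated gauge transformations assemble into a bona fide element of $G(\cK)$ rather than a merely formal product. This requires careful bookkeeping in the (possibly non-split) Moy--Prasad filtration together with a Galois-descent argument from the splitting cover $\cK_b$; a secondary subtlety is verifying that the adjustments of $g$ in the second part do not push it outside $N(S)$ and that the residual $S_0$-action really is trivial on $\cA(S,r)$, not just on its leading component. Once these structural facts are in hand, both parts of the theorem follow by comparatively routine arguments.
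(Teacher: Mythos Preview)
The paper does not prove this theorem; it is quoted verbatim as \cite[Corollary 5.14]{BremerSagereg} and used as a black box. So there is no proof in the present paper to compare your attempt against.

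That said, your sketch is broadly the right shape for how such a result is established in the cited reference: one uses the regularity of $\beta^\flat$ to get that $\ad(\beta^\flat)$ is invertible on $\fs^\perp$, then inductively kills the off-diagonal part by gauge transformations of the form $\exp(X)$ with $X$ in successively deeper Moy--Prasad pieces. A few points are off or underspecified. First, no ``preliminary conjugation by the reductive quotient'' is needed: by definition of an $S$-regular stratum, $S$ is the connected centralizer of $\beta^\flat$, so $\beta^\flat\in\fs(-r)$ automatically. Second, your treatment of the second assertion is the weakest part. The claim that a gauge equivalence between two formal types can be corrected to lie in $N(S)$ is not a matter of ``running diagonalization in reverse''; one must show that the stabilizer of the regular semisimple leading term forces $g$ into the normalizer modulo a pro-unipotent correction, and then that this correction can be absorbed without leaving $N(S)$. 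Third, your assertion that $S_0$ acts trivially on $\cA(S,r)$ is essentially correct but your justification is circular: you say the $(dg)g^{-1}$ term is ``absorbed into the translation part,'' but the translation part is precisely the $S/S_0$-action you are trying to isolate; what is actually needed is that for $g\in S_0$ the Maurer--Cartan form $(dg)g^{-1}$ lies in $\fs_{0+}\dtt$ and hence does not touch the components of the formal type in degrees $[-r,0]$.
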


\begin{exam}\label{Cox}
  Let $S$ be a Coxeter maximal torus.  After conjugating, we may
  assume that it is the connected centralizer of the matrix
  $\zeta\dtt$ of a Frenkel-Gross connection with
  $\zeta=x_{\a_0}t^{-1}+\sum_{i=1}^n x_{-\a_i}\in
  t^{-1}\fu_{\a_0}\oplus\bigoplus_{i=1}^n \fu_{-\a_i}$.  In this case,
  $x_I$ is \emph{graded compatible} with $\fs$, i.e.,
  $\fs(r)=\fg_{x_I}(r)\cap\fs$ for all $r$. It is easy to see that
  $S$ is elliptic.  Indeed, an element of
  $\fs(0)\subset\fg_{x_I}(0)=\fh$ would commute with the principal
  nilpotent element $N=\sum_{i=1}^n x_{-\a_i}$.  However, since $G$ is
  simple, 
  $\fz(N)$ is a subset of $\bar{\fn}$, the nilpotent radical of the Borel
  subalgebra opposite to $\fb$, so $\fs(0)=\{0\}$.  This means that the
  action of $S/S_0$ on $\cA(S,\frac{m}{h})$ is trivial, so the moduli
  space of $S$-toral flat $G$-bundles of slope $m/h$ is just
  $\cA(S,\frac{m}{h})/W_S$.

 When $m=1$, we may be entirely explicit.  First, we show that
 $\fs(-1/h)$ is one-dimensional.  Suppose that
 $y_{\a_0}t^{-1}+N'\in\fs(-1/h)\subset\fg_{x_I}(-1/h)$; here,
 $y_{\a_0}\in\fu_{\a_0}$ and $N'\in\fg_{x_I}(-1/h)\cap\fn$.  Since $N$
 is regular nilpotent and $[N,N']=0$, $N'=\lambda N$ for some
 $\lambda\in\bC$.  It follows that $[N,\lambda x_{\a_0}-y_{\a_0}]=0$,
 so $\fz(N)\subset\bar{\fn}$ implies $y_{\a_0}=\lambda x_{\a_0}$.
 Next, $W_S$ is
  isomorphic to a subgroup of the centralizer of a Coxeter element in
  $W$~\cite[Proposition 5.9]{BremerSagereg}, so $W_S$ is a cyclic
  group of order $h'|h$.  Since nonzero elements of $\fs(-1/h)$ are
  regular semisimple, $W_S$ acts freely on it.  Thus, the moduli space
 $\cA(S,\frac{m}{h})/W_S$ is isomorphic to $\bC^*/\mu_{h'}$.
\end{exam}

\subsection{Proof of Proposition~\ref{p:FG}} 

We will begin the proof of the proposition with two lemmas.

\begin{lem} If $(\cE,\n)$ is an $S$-toral flat $G$-bundle of slope
  $r$, then $\Irr(\Ad(\n))=r|\Phi|$.
  In particular, a Frenkel-Gross
  connection has irregularity $r=\rank(G)$.
\end{lem}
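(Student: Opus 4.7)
The plan is to reduce the computation to a sum of rank-one connections indexed by the roots, by using Theorem~\ref{formaltypes} to diagonalize $\n$ after passing to a splitting cover.

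First, Theorem~\ref{formaltypes} lets us assume $\n = d + A\,\dtt$ with $A \in \cA(S,r)$, so in particular $A\in\fs$ and its leading graded component $A^\flat \in \fs(-r)$ is regular semisimple. Let $\cK_b$ be a splitting field for $S$, so that $\fs\otimes_\cK \cK_b$ is $G(\cK_b)$-conjugate to $\fh(\cK_b)$. Pulling back to $\cDt_b$ and conjugating by a suitable $g\in G(\cK_b)$ transforms $\pi_b^*\n$ into a connection whose matrix (with respect to $\frac{du}{u}$) lies in $\fh(\cK_b)$; the correction term $-(dg)g^{-1}$ can be absorbed into the regular part without affecting the principal part. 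Thus $\pi_b^*\n$ is gauge equivalent to $d + \tilde{A}\,\frac{du}{u}$ with $\tilde A\in\fh\otimes\cK_b$, whose most singular term is $\tilde A_{-rb}\, u^{-rb}$ with $\tilde A_{-rb}\in\fh$ regular semisimple (this element is the image of $A^\flat$ under the splitting).

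Next, because $\tilde A$ takes values in $\fh$, the root-space decomposition $\fg = \fh \oplus \bigoplus_{\a\in\Phi}\fu_\a$ is preserved by $\Ad(\pi_b^*\n)$, giving a direct sum decomposition into a trivial rank-$\rank(G)$ piece on $\fh$ and rank-one pieces
\[
d + \a(\tilde A)\,\tfrac{du}{u}, \qquad \a\in\Phi,
\]
on each $\fu_\a$. Regularity of $\tilde A_{-rb}$ ensures $\a(\tilde A_{-rb})\neq 0$ for every root $\a$, so $\a(\tilde A)$ has pole of order exactly $rb$ in $u$. This expression already satisfies the Jordan form hypothesis, and by the formula for irregularity the contribution of this piece, computed on $\cDt$, is $rb/b = r$. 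Summing over all roots (and noting that $\fh$ contributes zero) gives
\[
\Irr(\Ad(\n)) \;=\; \sum_{\a\in\Phi} r \;=\; r\,|\Phi|.
\]

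For the Frenkel-Gross case one then applies $r = 1/h$ together with the classical identity $|\Phi| = h\cdot\rank(G)$ (Kostant's formula) to conclude $\Irr(\Ad(\n)) = \rank(G)$. The only real subtlety is in the first step: one must check that the $G(\cK_b)$-conjugation identifying $\fs\otimes\cK_b$ with $\fh(\cK_b)$ preserves the leading pole order and regular-semisimplicity of the leading term, so that the rank-one pieces really do have slope $r$ on $\cDt$; this is the main obstacle, and it amounts to tracking the interaction between the Moy-Prasad grading on $\fs$ and the standard valuation on $\fh(\cK_b)$ under the splitting.
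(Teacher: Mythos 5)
Your proof follows essentially the same route as the paper's: pass to the splitting cover $\cK_b$, bring the connection matrix into $\fh(\cK_b)$ with regular semisimple leading term of valuation $-rb$, decompose $\Ad$ via root spaces into rank-one pieces of slope $rb$ (plus a trivial $\fh$ piece), and sum. The one place where the paper is cleaner is precisely the subtlety you flag at the end: rather than first diagonalizing $\n$ into $\fs$ via Theorem~\ref{formaltypes} on $\cDt$ and then separately conjugating $\fs\otimes_\cK\cK_b$ to $\fh(\cK_b)$ (which forces you to control the gauge correction term $-(dg)g^{-1}$ of that second conjugation), the paper observes that the pullback $\pi_b^*\n$ is itself toral for the \emph{split} maximal torus $H(\cK_b)$ and directly applies the diagonalization theory there; this produces a trivialization with $[\pi_b^*\n]=X\,\dtt$, $X\in\fh(\cK_b)$, with $\val(\a(X))=-rb$ for every root $\a$, with no separate correction term to track. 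You might also note that the claim ``$\a(\tilde A_{-rb})\neq 0$ for every $\a$'' uses not just that the leading term is semisimple but that it is \emph{regular} semisimple in $\fh$, which you do invoke but is worth stating as the reason all $|\Phi|$ line bundles contribute slope exactly $r$ rather than something smaller.
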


A consequence of this lemma is that for a toral flat $G$-bundle of
slope $r$,  $r|\Phi|$ is an integer.

\begin{proof}  Suppose that $S$ splits over the degree $b$ extension
  $\cK_b$ with uniformizer $u$ such that $u^b=t$.  The pullback
  connection $\n'$ is toral for a split maximal torus.  Accordingly,
  we can choose a trivialization for which $[\n']=X\dtt$, where $X\in\fh(\cK_b)$ with regular semisimple leading term;
  moreover, for each root $\alpha$, $\alpha([X])$ has valuation
  $-rb$.  Thus, the adjoint connection of $\n'$ is the
  direct sum of $|\Phi|$ flat line bundles of slope $rb$ and $n$
  trivial flat line bundles.  It follows that the irregularity of $\Ad(\n)$
  is $|\Phi|rb/b$ as desired.

  If $\n$ is a Frenkel-Gross connection, it is $S$-toral with $S$ a
  Coxeter maximal torus and has slope $1/h$.  Since $|\Phi|/h=n$,
  $\Irr(\Ad(\n))=n$.
\end{proof}

Now, let $\n$ be a flat $G$-bundle of slope $1/h$.  We want to show
that $\n$ must be a formal Frenkel-Gross connection. The fact that
the slope of $\n$ equals $1/h$ means that $\n$ contains a fundamental
stratum $(x,1/h,\b)$ for some $x\in\cB$ with respect to some
trivialization.  By equivariance, we can assume that $x$ lies in the
fundamental alcove $C$ corresponding to the standard Iwahori subgroup
$I$.  Let $x_I$ be the barycenter of $C$.

\begin{lem} The barycenter $x_I$ is the unique point $x\in C$ for which
  there exists a fundamental stratum of the form $(x,1/h,\b)$.
\end{lem}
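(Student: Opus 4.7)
The plan is to analyze the graded subspace $\fg(\cK)_x(-1/h)$ for each $x$ in the closed fundamental alcove $C$ and show that it admits a non-nilpotent element only at $x = x_I$. Parametrize $x\in C$ by the non-negative reals $a_i := \alpha_i(x)$ for $i = 1, \ldots, n$ and $a_0 := 1 - \alpha_0(x)$; these satisfy $a_0 + \sum_{i=1}^n c_i a_i = 1$, where the $c_i$ are the marks of the highest root. Setting $c_0 = 1$, one has $\sum_{i=0}^n c_i = h$, so the barycenter $x_I$ is the unique point of $C$ with $a_i = 1/h$ for every $i$. Since $\alpha(x) \in [0,1]$ for each $\alpha \in \Phi^+$ and $x \in C$, the Moy--Prasad grading formula gives
\[
\fg(\cK)_x(-1/h) \;=\; \bigoplus_{\alpha \in A(x)} \fu_{-\alpha} \;\oplus\; \bigoplus_{\alpha \in B(x)} t^{-1}\fu_\alpha,
\]
where $A(x) = \{\alpha \in \Phi^+ : \alpha(x) = 1/h\}$ and $B(x) = \{\alpha \in \Phi^+ : \alpha(x) = (h-1)/h\}$.

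Next I would reduce non-nilpotency of an element $\xi$ of this space to the non-vanishing of a single $G$-invariant. Let $p_1, \ldots, p_n$ be homogeneous generators of $\bC[\fg]^G$ with degrees $d_1 < \cdots < d_n = h$, so $2 \le d_i < h$ for every $i < n$. The $x$-grading endows $p_i(\xi) \in \cK$ with $t$-grading $-d_i/h$. Since $\cK$ is $\bZ$-graded, this scalar vanishes unless $h \mid d_i$, which by the degree constraints forces $p_i(\xi) = 0$ for all $i < n$. Because $\xi$ is nilpotent if and only if every $G$-invariant of positive degree vanishes on it, we conclude that $\xi$ is non-nilpotent if and only if $p_n(\xi) \ne 0$.

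Finally, I would expand $p_n(\xi)$ for $\xi = \sum_{\alpha \in A(x)} c_\alpha x_{-\alpha} + \sum_{\beta \in B(x)} d_\beta t^{-1} x_\beta$. Each nonzero contribution is a product of $h$ root-vector factors whose total $t$-power is $t^{-1}$ and whose root-sum vanishes. Since each $B$-factor carries a $t^{-1}$, exactly one $B$-factor can appear among the $h$, yielding the combinatorial identity
\[
\alpha_B = \alpha_1^A + \cdots + \alpha_{h-1}^A
\]
for some $\alpha_B \in B(x)$ and $\alpha_j^A \in A(x)$ (counted with multiplicity). Taking heights gives $\on{ht}(\alpha_B) = \sum_j \on{ht}(\alpha_j^A) \ge h - 1$; since $\alpha_0$ is the unique positive root of maximal height $h - 1$, we obtain $\alpha_B = \alpha_0$ and each $\alpha_j^A$ simple. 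The decomposition $\alpha_0 = \sum_{i=1}^n c_i \alpha_i$ then forces every simple root $\alpha_i$ to lie in $A(x)$, i.e., $a_i = 1/h$ for $i \ge 1$; combined with $\alpha_0 \in B(x) \Leftrightarrow a_0 = 1/h$, this yields $x = x_I$.

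The main conceptual obstacle is the reduction to a single $G$-invariant via the grading restriction on $\cK$; once this is in place, the height argument closes the proof immediately. A secondary check, which the preceding discussion of Frenkel--Gross connections already supplies, is that at $x = x_I$ the contributions do not accidentally cancel, so $p_n$ does not vanish identically on $\fg(\cK)_{x_I}(-1/h)$.
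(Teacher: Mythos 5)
Your proof is correct, and it takes a genuinely different route from the paper's. The paper argues structurally: it sets $I = \{i : \alpha_i(x) \le 1/h\}$, observes that a fundamental stratum at $x$ forces a positive root $\alpha$ with $\alpha(x) = 1 - 1/h$, and then shows that if $I \ne \{1,\ldots,n\}$ then $\fg_x(-1/h)$ sits inside $(\fl_I \cap \bar\fn) \oplus \fn_I$, the direct sum of a nilpotent part of a Levi with the nilradical of the corresponding parabolic, hence consists entirely of nilpotent elements. Your argument instead uses invariant theory: the Moy--Prasad grading forces every homogeneous $G$-invariant of degree $d$ to evaluate in $\C t^{-d/h}$ on $\fg_x(-1/h)$, so all invariants of degree $< h$ vanish and non-nilpotency is equivalent to the nonvanishing of the single top-degree invariant $p_n$ of degree $h$; the subsequent height count pins down $x = x_I$. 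Both proofs are clean; the paper's is shorter and purely root-theoretic, while yours makes the role of the Coxeter number $h = d_n$ conceptually transparent and isolates exactly which invariant is responsible for detecting the fundamental stratum. One tiny cosmetic remark: you write $d_1 < \cdots < d_n$, but the degrees of $W$ need not be distinct (e.g.\ $D_4$); only $d_i < d_n = h$ for $i < n$ is needed, and that always holds, so the argument is unaffected.
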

\begin{proof}  Suppose there is a fundamental stratum of depth $1/h$
  based at $x$.  Since $x\in C$, $\a_i(x)\ge 0$ for $1\le i\le n$ and
  $\a_0(x)\le 1$.  This implies that  if $\a$ is positive
  (resp. negative), then $\a(x)\in[0,1]$ (resp.  $\a(x)\in[-1,0]$.
  As a result, 
\begin{equation*}\fg_x(-1/h)=\bigoplus_{\{\a<0\mid
      \a(x)=-1/h\}} \fu_\a \oplus \bigoplus_{\{\a>0\mid
      \a(x)=1-1/h\}} t^{-1}\fu_\a.
  \end{equation*}

Let $I=\{i\in [1,n]\mid \a_i(x)\le 1/h\}$, and let $J=I^c$.  Let
$\fp_I$ be the standard parabolic subalgebra generated by $\fb$ and
those $\fu_{\a_i}$ with $i\in I$.  We denote its standard Levi decomposition
by $\fp_I=\fl_I\oplus \fn_I$.  

Since $\fg_x(-1/h)$ contains a non-nilpotent element, there must exist
a positive root $\a$ with $\a(x)=1-1/h$.  (Otherwise, $\fg_x(-1/h)$ is
contained in $\bar{\fn}$.)  Since any positive root is the sum of
at most $h-1$ simple roots, either the decomposition of $\a$ into
simple roots involves $\a_j$ for $j\in J$ or else $J$ is empty,
$\a_i(x)=1/h$ for all $i$, and $\a=\a_0$.  The second case is just
$x=x_I$.

It remains to show that $J$ cannot be nonempty.  If not, then we see
that $\fg_x(-1/h)\subset (\fl\cap\bar{\fn})\oplus \fn_I$.  However, if
$X\in\fp_I$ is the sum of a nilpotent element of $\fl_I$ and an
element in $\fn_I$, it is nilpotent.  Thus, every element of
$\fg_x(1/h)$ is nilpotent, a contradiction.
\end{proof}

\begin{rem} A similar argument gives another proof that $1/h$ is the
  smallest possible slope of an irregular singular flat $G$-bundle.
  Indeed, if this were false, then there would exist a fundamental
  stratum $(x,r,\b)$ with $x\in C$ and $0<r<1/h$. Setting $I=\{i\in
  [1,n]\mid \a_i(x)\le r\}$, we obtain that for any $\a>0$ coming from
  $\fl_I$, $\a(x)\le (h-1)r<1-1/h<1-r$.  It follows that
  $\fg_x(-r)\subset (\fl\cap\bar{\fn})\oplus \fn_I$ consists entirely
  of nilpotent elements, a contradiction.
\end{rem}

We now know that $\n$ contains a fundamental stratum $(x_I,1/h,\b)$ so
that the leading term of $[\n]$ with respect to the $x_I$ filtration
has the form $(t^{-1}y_0 +\sum_{i=1}^n y_{i})\dtt$ with $y_{0}\in\fu_{\a_0}$ and
$y_{i}\in\fu_{-\a_i}$ for $i\ge 1$.  In order for this element to be
non-nilpotent, every $y_i$ must be nonzero.  Indeed, if some $y_i$
equals $0$, then the remaining $n$ roots are a base for a maximal rank
reductive subalgebra of $\fg$, so that the  leading term is
nilpotent, a contradiction.

We thus have each $y_i$ nonzero.  It follows that this leading term is
regular semisimple with centralizer a Coxeter maximal torus $S$ with
Lie algebra $\fs$.  The unique Moy-Prasad filtration on $\fs$ is
induced by a grading with degrees in $\frac{1}{h}\bZ$.  By
Theorem~\ref{formaltypes}, after applying a gauge change, one can
assume that $[\n]\in(\fs(-1/h)\oplus \fs(0))\dtt$.  However, since $G$
is simple and $S$ is elliptic, $\fs(0)=\{0\}$.  Hence,
$\n=d+(t^{-1}x_0 +\sum_{i=1}^n x_i)\dtt$ in this trivialization.
Since $t^{-1}x_0 +\sum_{i=1}^n x_i\in\fs(-1/h)$ is regular semisimple,
each $x_i$ is nonzero, i.e., $\n$ is a Frenkel-Gross flat $G$-bundle.
In fact, as shown in Example~\ref{Cox}, $\fs(-1/h)$ is
one-dimensional, and the cyclic group $W_S\cong\mu_{h'}$ acts freely
on $\fs(-1/h)\setminus\{0\}$.  Thus, the set of isomorphism classes of Frenkel-Gross
connection is isomorphic to $\bC^*/\mu_{h'}$.  This concludes the
proof of the proposition.\qed

\section{A key result about Weyl groups} \label{s:Weyl}

For the remainder of the paper, let $G$ be a simple complex algebraic
group of rank $r$. Recall that $|\Phi|=hr$.

\subsection{Statement of the result} 

For $x\in \fh$, define a non-negative integer  
\begin{equation} \label{eq:Nx}
N(x):=|\{ \alpha \in \Phi \, | \, \a(x)\neq 0\}|.
\end{equation} 
The stabilizer $W_x$ of $x$ is a parabolic subgroup generated by the
set $\Phi_x$ for which $\a(x)=0$, so that $N(x)=|W|-|W_x|$.

For each positive integer $b$, let $V(b)\subseteq \fh$ denote the
union of all the eigenspaces of elements of $W$ with eigenvalue a primitive
$b^\mathrm{th}$ root of unity, i.e. 
\[
V(b)=\{ x\in \fh \, | \, w x=\zeta x \,\, \textrm{for some $w\in W$ and some primitive $b^\mathrm{th}$ root of unity $\zeta$}\}.
\]
Let $d_1,\dots,d_m$ be the degrees of $W$, and choose $Q_1,\dots,
Q_m\in\bC[\fh]^W$ homogeneous polynomials with $\deg(Q_i)=d_i$ that
generate $\bC[\fh]^W$.  It is a result of
Springer~\cite[Proposition 3.2]{Springer} that 
\begin{equation}\label{vbq} V(b)=\bigcap_{b\nmid d_i}\{x\mid Q(x)=0\}.
\end{equation}
In particular, $V(b)=\{0\}$ unless $b$ divides an exponent
of $W$, so $V(b)$ nonzero implies that $b\le h$.

The following result will play a key role for us:

\begin{thm}\label{t:reflection}  Let $b$ be a positive integer, and suppose  $x\in V(b)\setminus\{0\}$. Then
  $N(x) \geq br$.  Moreover, equality is achieved if and only if
  $b=h$, in which case $x$ is a regular eigenvector for a Coxeter element of
  $W$.
\end{thm}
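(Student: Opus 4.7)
First, establish the orbit structure: since $x \in V(b) \setminus \{0\}$, there exist $w \in W$ and a primitive $b$-th root of unity $\zeta$ with $wx = \zeta x$. The identity $(w^k\alpha)(x) = \zeta^{-k}\alpha(x)$ shows that $w$ preserves $\Phi^x := \{\alpha \in \Phi : \alpha(x) \neq 0\}$ and $\Phi_x := \Phi \setminus \Phi^x$, and moreover $w^k\alpha = \alpha$ for $\alpha \in \Phi^x$ only when $b \mid k$. Hence every $\langle w\rangle$-orbit on $\Phi^x$ has size divisible by $b$, and $N(x) = bm$ where $m$ denotes the number of such orbits. The inequality $N(x) \geq br$ is therefore equivalent to $m \geq r$.

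The central estimate $m \geq r$ I plan to establish using the parabolic stabilizer $W_x$, which is parabolic by Steinberg's theorem and is normalized by $w$ (since $w$ preserves $\Phi_x$). The image $\bar w = wW_x$ in the quotient $\bar W := N_W(W_x)/W_x$ acts on $\fh^{W_x} \ni x$ with $\bar w x = \zeta x$; the vector $x$ is regular for $\bar W$, because no root of $\Phi \setminus \Phi_x$ vanishes on $x$. By Howlett's structure theorem $\bar W$ is a (complex) reflection group on $\fh^{W_x}$, and by Lehrer-Springer $\bar w$ is a $\zeta$-regular element of $\bar W$ of order exactly $b$, acting freely on the reflection hyperplane arrangement of $\bar W$. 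Each such hyperplane arises as the restriction of a $W_x$-orbit of roots in $\Phi \setminus \Phi_x$, and the $\langle \bar w\rangle$-orbit count on the hyperplane set accounts for $\bar r = r - \rank(W_x)$ of the $\langle w\rangle$-orbits in $\Phi^x$. The remaining contribution, namely $\rank(W_x)$ further orbits, comes from decomposing $W_x$-orbits on $\Phi\setminus\Phi_x$ into $\langle w\rangle$-orbits, combined with the reflection structure of $W_x$ itself, giving in total $m \geq r$.

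For the equality $N(x) = br$, one has $m = r$ and every orbit on $\Phi^x$ has size exactly $b$. If $\Phi_x$ were nonempty, the contributions from $W_x$-orbits described above would produce a strict improvement on the bound, contradicting equality; hence $\Phi_x = \emptyset$ and $x$ is regular in $W$. Then $|\Phi^x| = |\Phi| = hr = br$ forces $b = h$, and $w \in W$ has a regular eigenvector $x$ with eigenvalue $\zeta$ of order $h$. Springer's classification of regular elements then identifies $w$ as a Coxeter element, and $x$ as a regular vector in the corresponding one-dimensional $\zeta$-eigenspace.

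I expect the main technical hurdle to be the bookkeeping in the second step, specifically accounting for the contribution of $W_x$-orbits on $\Phi \setminus \Phi_x$ and matching it with $\rank(W_x)$ to get exactly the bound $m \geq r$. The precise relation between $\bar W$'s reflection structure and the $\langle w \rangle$-orbit count on $\Phi^x$ requires careful use of Howlett's structure theorem together with the Lehrer-Springer description of regular elements in complex reflection groups.
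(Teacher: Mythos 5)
Your approach is genuinely different from the paper's. The paper proves Theorem~\ref{t:reflection} case by case: the statement for $b=h$ is Kostant's theorem, and for $b<h$ the classical types are handled by analyzing the sparse characteristic polynomials that arise from \eqref{vbq} and bounding stabilizers explicitly, while the exceptional types are handled by direct inspection of parabolic subgroup data. You instead attempt a uniform argument via the $\langle w\rangle$-orbit structure on $\Phi^x$ together with Howlett's theorem on $\bar W = N_W(W_x)/W_x$ and the Lehrer--Springer theory of regular elements. A type-free proof along these lines would be attractive, but as written your argument has a genuine gap at its central step.

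Both the claimed identity $N(x)=bm$ and the key estimate $m\ge r$ fail for an arbitrary choice of $w$ with $wx=\zeta x$, and you never specify how $w$ is to be chosen. Concretely, take $G=\SL_4$ (so $r=3$, $h=4$), $b=2$, $x=(1,-1,1,-1)$, and $w=(1\,2\,3\,4)$; then $wx=-x$ with $\zeta=-1$ primitive of order $b$. Here $\Phi^x$ consists of the eight roots $\pm\alpha_{12},\pm\alpha_{14},\pm\alpha_{23},\pm\alpha_{34}$, and $\langle w\rangle$ acts on $\Phi^x$ with exactly two orbits, each of size $4=2b$. Thus $m=2<3=r$ and $N(x)=8\ne bm$, so your reduction of $N(x)\ge br$ to $m\ge r$ breaks down; the target inequality survives only because the orbits are larger than $b$. (For the same $x$ one may instead take $w=(1\,2)(3\,4)$, which has order $b$ and gives $m=4$; but your argument does not establish that a choice of $w$ of order exactly $b$ always exists, nor does it prove $m\ge r$ even when it does.) The bookkeeping asserted in your second paragraph --- that $\langle\bar w\rangle$-orbits on the hyperplane arrangement of $\bar W$ account for $r-\rank(W_x)$ orbits, with $\rank(W_x)$ more coming from $W_x$-orbits on $\Phi\setminus\Phi_x$ --- is exactly what needs to be proved and is not; in particular $\bar W$ need not act essentially on $\fh^{W_x}$, so the hyperplane count (let alone the orbit count) can fall short of $r-\rank(W_x)$. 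You flag this as the main technical hurdle, and indeed the proof has not cleared it.
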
 

For more details about this result, including its relation with
previous results on eigenvalues of Coxeter elements, we refer the
reader to \cite{Masoud}.

We remark that the case $b=h$ is a theorem of Kostant~\cite[\S
9]{Kostant}, so in what follows, we assume that $b<h$.  We now discuss
the proof, which uses a case by case analysis.

\subsection{Proof for classical groups}

The proof in the classical types proceeds as follows. First, we rule
out small values of $b$. Next, we note that $x$ must be a root of a
certain class of polynomials. Finally, we analyze the roots of these
polynomials and show that their stabilizers in $W$ can never be ``too
big''. To illustrate this, we give the complete proof for $G=\SL_n$.
The adaptation to other classical groups is straightforward.

Let $V$ be the subspace of the $\bR^n$ consisting
of $n$-tuples $(x_1,\cdots, x_n)$ satisfying $\sum x_i=0$. Here, the
Weyl group is $S_n$.  It will be convenient to take the coefficients of
the characteristic polynomial as the invariant polynomials on $\fg$
and on $\fh$. Up to sign, these are the elementary symmetric
polynomials: if $x=\on{diag}(x_1,\cdots, x_n) \in \fh\cong V_\bC$, then the
characteristic polynomial of $x$ is
\[
P(X)=\sum c_i X^i,
\]
where $c_1=-\sum x_i=0, \cdots, c_n=(-1)^n x_1\cdots x_n$.

\begin{lem} Suppose $x\in V(b)$. Then, there exists $ 1\leq k\leq \frac{n}{b}$ and
complex numbers $a_i$ such that the $x_i$'s are the roots of the
polynomial
\begin{equation}
P(X):=X^n+a_1 X^{n-b} + \cdots + a_k X^{n-bk}, \quad \quad a_k\neq 0.
\end{equation} 
\end{lem}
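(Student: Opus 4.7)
The plan is to apply Springer's characterization of $V(b)$ recalled in \eqref{vbq} directly, using the particularly nice choice of fundamental invariants for $S_n$.

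First, I would observe that for $W=S_n$ acting on $\fh\cong V_\bC$, the degrees are $d_1,\dots,d_{n-1}=2,3,\dots,n$, and a system of homogeneous generators for $\bC[\fh]^W$ is given by the elementary symmetric polynomials $e_2,e_3,\dots,e_n$ (with $\deg e_j=j$), evaluated on the diagonal entries $x_1,\dots,x_n$ of $x$. This is precisely the natural choice suggested by the passage, where the $c_i$'s (coefficients of the characteristic polynomial) are $\pm e_i$. By Springer's formula \eqref{vbq}, $x\in V(b)$ forces $e_j(x)=0$ for every $j\in\{2,\dots,n\}$ with $b\nmid j$; moreover $e_1(x)=\sum x_i=0$ since $x\in\fh$.

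Second, I would substitute this vanishing into the expansion of the characteristic polynomial
\[
\prod_{i=1}^n(X-x_i)=\sum_{j=0}^n(-1)^je_j(x)\,X^{n-j}.
\]
Only the terms with $j=0$ or $b\mid j$ survive, so the polynomial takes the form
\[
P(X)=X^n+a_1X^{n-b}+a_2X^{n-2b}+\cdots+a_{\lfloor n/b\rfloor}X^{n-\lfloor n/b\rfloor b},
\]
with $a_i=(-1)^{ib}e_{ib}(x)$. Taking $k$ to be the largest index with $a_k\neq 0$ gives the required form, and clearly $k\leq n/b$.

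Finally, one needs $a_k\neq 0$ to exist at all, i.e.\ $P(X)\neq X^n$; this is exactly the assumption $x\neq 0$ made in Theorem~\ref{t:reflection}. There is no serious obstacle here: the entire content is the translation from Springer's theorem (degrees not divisible by $b$ force the corresponding invariants to vanish) into the language of characteristic polynomials via the choice of elementary symmetric generators. The real work of the classical-type argument lies in the next step — analyzing the possible root patterns of such lacunary polynomials to bound the size of the stabilizer $W_x$ — which is not what this lemma asks.
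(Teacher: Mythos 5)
Your proof is correct and follows essentially the same route as the paper: apply Springer's characterization \eqref{vbq} with the elementary symmetric polynomials $e_2,\dots,e_n$ as fundamental invariants for $S_n$ on the trace-zero hyperplane, read off the vanishing of the coefficients $(-1)^je_j(x)$ of the characteristic polynomial for $b\nmid j$, and conclude. You also correctly flag the implicit hypothesis $x\neq 0$ (inherited from Theorem~\ref{t:reflection}) needed to guarantee that some $a_k$ is nonzero, a point the paper's two-line argument leaves tacit.
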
 

\begin{proof} 
 Indeed, suppose $x=(x_1,\cdots,
x_n)\in V(b)$.  Then by \eqref{vbq}, $c_i=0$  for all $i$ with $b\nmid i$. Now take $a_i=c_{n-bi}$. 
\end{proof}

Let $x\in V_\bC$ be a non-zero element, so that $W_x$ is a
proper parabolic subgroup. It is easy to check that the proper
parabolic of $W$ with the largest number of roots is isomorphic to
$S_{n-1}$. Thus, for all non-zero $x\in V_\bC$, we have
\[
|\Phi|-|\Phi_x|\geq n(n-1)-(n-1)(n-2) = 2(n-1)>(n-1).
\]
Therefore, the theorem is evident for $b=1$. 

Henceforth, we assume $1<b<n$, so $n\ge 3$.  Let $P(X)$ denote the polynomial in the above lemma. Let $\gamma_1,\cdots, \gamma_k$ denote the roots of the polynomial
\[
Q(Y):=Y^k + a_1Y^{k-1} + \cdots + a_k.
\]
Since $a_k\neq 0$, we have that $\gamma_i\neq 0$ for all $i$. Let
$\zeta$ be a primitive $b^\mathrm{th}$ root of unity. Then, the roots
of $P(X)$ equal
\[
\zeta^i \gamma_j, \quad \quad i\in \{1,2,\cdots, b\},\quad j\in
\{1,2,\cdots, k\}
\]
together with $n-kb$ copies of $0$.

Note that the largest possible stabilizer for $x$ (for fixed $b$) is
achieved if and only if $\zeta^{i_1} \gamma_1=\zeta^{i_2} \gamma_2 =
\cdots = \zeta^{i_{k}} \gamma_k$ for some integers $i_1,\cdots, i_k$.
In this case,
\[
W_x \simeq (S_k)^{b} \times S_{n-bk}.
\]

Thus, for every non-zero $x\in V_\bC$, we have 
\[
\begin{aligned} 
  N(x) = |\Phi| - |\Phi_x| & \geq  |\Phi_{S_{n}}| - |\Phi_{(S_k)^{b} \times S_{n-bk}}| \\[1ex]
  &  =     n(n-1)-[bk(k-1)+(n-kb)(n-kb-1)]\\[1ex]
  & = 2kbn-k^2b^2 - bk^2.
\end{aligned} 
\]

We claim that $N(x)>b(n-1)$. Indeed, if $k=1$, then since $b<n$,
\[
N(x) = 2bn-b^2 -b > b(n-1). 
\]
On the other hand, if $k>1$, then
\[
N(x)= 2kbn-k^2b^2 - bk^2 \geq b(2kn - kn - k^2) > b(n-1). 
\]
Here, the first inequality follows from the fact that $bk\leq n$. The
second inequality is equivalent to
\[
n(k-1)>(k-1)(k+1)\iff n>k+1,
\]
 which is true because $k\leq \frac{n}{2}$ and $n\ge 3$. This completes the proof for $G=\SL_n$. \qed

 \subsection{Proof in the exceptional cases} 

Next, we turn our attention to the proof in the exceptional cases. We
provide the complete proof for $E_6$. The proof for the other
exceptional types is similar, but easier.

Recall that $E_6$ has $72$ roots and its degrees are $2$, $5$, $6$,
$8$, $9$, and
$12$.  It is easy to check that the proper parabolic of $E_6$ with the
largest number of roots is $D_5$ with $40$ roots. Thus, for all
non-zero $x\in V_\bC$, we have
\[
N(x)\geq 72-40=32>5\times6,
\]
so the claim holds for $b\leq 5$.

Now, assume $b>5$.  Let $Q$ denote the unique, up to scalar,
homogeneous quadratic invariant polynomial. If $x$ is an eigenvector
with eigenvalue a primitive $b^\mathrm{th}$ root of unity, then
$Q(x)=0$, because $b\nmid 2=\deg(Q)$. Using this fact, it is easy to show that the stabilizer $W_x$ must be a parabolic
subgroup of rank $\leq 4$, cf. \cite[Corollary 5]{Masoud}. 
The maximum number of roots in a parabolic subgroup of $E_6$ of rank
$\leq 4$ is $24$ (for $D_4$). Thus,
\begin{equation} \label{eq:E_6}
N(x)\geq 72-24=48>6\times 6,
\end{equation} 
so the result is also true for $b=6$. 

The remaining cases are $b=8$ and $b=9$. Let $x\in V(9)$ be a non-zero
element. One can show that if $x$ is not regular, then there exists a
proper parabolic subgroup of $W$ with a degree divisible by $9$. But
there is no such parabolic subgroup of $E_6$. Thus, $x$ must be
regular, and so the theorem is immediate.

It remains to treat the case $b=8$. Suppose $x$ is a non-regular
non-zero element of $V(8)$. One can show (cf. \cite[Lemma
4.12]{Springer}) that there exists a proper parabolic subgroup $P<
W$ and $w\in P$ such that 
\[
w\cdot x=\zeta x,
\]
where $\zeta$ is a primitive $8^\mathrm{th}$ root of unity.  

Now, the parabolic $P$ must have a degree divisible by $8$. The only
possibility is $P\simeq D_5$.  In this case, however, $8$ is the
\emph{highest} degree of $P$, and so, by a theorem of Kostant \cite[\S
9]{Kostant}, $x$ is \emph{regular} for the reflection action of $P$.
In particular,
 \[
 \a(x)\neq 0,\quad \quad  \textrm{for all roots $\alpha$ of $P$.}
 \]
It follows that  $\a(x)$ is zero for at most one simple root of $W$. Hence, either $x$ is regular or $W_x\simeq A_1$. In both cases, we have $N(x)>6\times 8$. \qed

 \subsection{Conjugacy classes over Laurent series}

 We record an application of Theorem \ref{t:reflection} to
 the study of rationality of conjugacy classes over Laurent series.

\begin{cor}\label{c:conjugacy}
Suppose that $Y\in\fg(\bcK)$ is conjugate to an
  element of $\fg(\cK)$ and that the semisimple part $X$ of $Y$ lies
  in $\fh(\bcK)$. Suppose 
  \[
  X=xt^{a/b} + \textrm{higher order terms}, \quad \quad  x\in \fh\setminus \{0\},\quad \gcd(a,b)=1.
  \]
    Then,
  $N(x)\geq br$. Moreover, equality is achieved if and only if $b=h$ is the Coxeter number, $Y=X$,
  and $x$ is a regular eigenvector of a Coxeter element of $W$.
\end{cor}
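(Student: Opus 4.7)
The plan is to reduce to Theorem \ref{t:reflection} by producing a Weyl group element that multiplies $x$ by a primitive $b^{\mathrm{th}}$ root of unity, and to handle the equality case via a centralizer computation for regular semisimple elements.

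First, carry out Galois descent. Since $Y$ is $G(\bcK)$-conjugate to some $Y' \in \fg(\cK)$, writing $gYg^{-1} = Y'$ and applying $\sigma \in \Gal(\bcK/\cK)$ gives $\sigma(Y) = (\sigma(g)^{-1} g)\, Y\, (g^{-1}\sigma(g))$, so $\sigma(Y)$ is $G(\bcK)$-conjugate to $Y$. The Jordan decomposition is preserved by both $\sigma$ and by conjugation, so $\sigma(X)$ is $G(\bcK)$-conjugate to $X$. Since both elements lie in the split Cartan $\fh(\bcK)$, they are related by a Weyl element: $\sigma(X) = w_\sigma \cdot X$ for some $w_\sigma \in W$. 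Now fix a primitive $b^{\mathrm{th}}$ root of unity $\zeta$ and take $\sigma$ with $\sigma(t^{1/b}) = \zeta\, t^{1/b}$. Since $\sigma$ and $w_\sigma$ both preserve the $t$-adic valuation, equating leading terms in $\sigma(X) = w_\sigma X$ yields $w_\sigma(x) = \zeta^a x$. Because $\gcd(a,b) = 1$, $\zeta^a$ is a primitive $b^{\mathrm{th}}$ root of unity, so $x \in V(b)$, and Theorem \ref{t:reflection} gives $N(x) \geq br$.

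For the equality case, suppose $N(x) = br$. Theorem \ref{t:reflection} forces $b = h$ and $x$ to be a regular eigenvector of a Coxeter element, so in particular $\alpha(x) \neq 0$ for every root $\alpha$. The main step is to show that the centralizer of $X$ in $\fg(\bcK)$ equals $\fh(\bcK)$: writing any commuting $Z$ as $Z_0 + \sum_\alpha c_\alpha e_\alpha$ with $Z_0 \in \fh(\bcK)$, $e_\alpha$ a fixed root vector in $\fu_\alpha$, and $c_\alpha \in \bcK$, and setting $\nu = \min\{\val(c_\alpha) : c_\alpha \neq 0\}$, the bracket $[xt^{a/b}, \sum_\alpha c_\alpha e_\alpha]$ produces $\sum_{\val(c_\alpha) = \nu} \alpha(x)\, t^{a/b} c_\alpha e_\alpha$ at valuation $a/b + \nu$. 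This term is nonzero (the $e_\alpha$ span distinct root spaces and each $\alpha(x) \neq 0$), and all remaining contributions to $[X, Z]$ have strictly larger valuation; hence $[X,Z] \neq 0$ unless all $c_\alpha = 0$. Thus $Z \in \fh(\bcK)$, and since $\fh(\bcK)$ contains no nonzero nilpotent elements, the nilpotent part of $Y$ must vanish, giving $Y = X$. Conversely, if $Y = X$ with $x$ regular and $b = h$, regularity of $x$ gives $N(x) = |\Phi| = hr = br$. The main technical point is the centralizer computation just described; the remaining steps reduce the problem cleanly to Theorem \ref{t:reflection}.
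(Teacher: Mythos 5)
Your argument is correct, but you reach $x\in V(b)$ by a genuinely different route than the paper. The paper's proof is Chevalley-style: for a $G$-invariant homogeneous polynomial $Q$ of degree $d$, the value $Q(X)$ must lie in $\cK$ (since $X$ is conjugate into $\fg(\cK)$ and $Q$ is conjugation-invariant), and the leading coefficient $Q(x)t^{da/b}$ then forces $Q(x)=0$ whenever $b\nmid d$; combined with Springer's characterization \eqref{vbq} of $V(b)$, this gives $x\in V(b)$. You instead run Galois descent directly on $X$: from $Y$ being $G(\bcK)$-conjugate into $\fg(\cK)$ you show $\sigma(X)$ is $G(\bcK)$-conjugate, hence $W$-conjugate, to $X$ inside $\fh(\bcK)$, and choosing $\sigma$ with $\sigma(t^{1/b})=\zeta t^{1/b}$ and comparing leading terms produces an explicit $w_\sigma\in W$ with $w_\sigma(x)=\zeta^a x$. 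Both are valid; yours produces the required Weyl element directly and avoids invoking \eqref{vbq}, while the paper's meshes with the invariant-theoretic framework already in use. Your descent step is in fact closer in spirit to what the paper does in the proof of Proposition~\ref{p:conjugacy2}, where a Galois/monodromy element $\theta\in G$ is extracted from Babbitt--Varadarajan. The equality case is handled essentially the same way in both, though you make explicit the centralizer computation ($\alpha(x)\ne 0$ for all $\alpha$ forces the centralizer of $X$ in $\fg(\bcK)$ to be $\fh(\bcK)$, which then kills the nilpotent part of $Y$) that the paper compresses into ``$X$ is regular semisimple; in particular $Y=X$.'' One small stylistic note: your valuation bookkeeping in that computation is more than is needed --- since $\alpha(X)=\alpha(x)t^{a/b}+\cdots$ is a nonzero element of the field $\bcK$, the identity $[X,Z]=\sum_\alpha c_\alpha\,\alpha(X)\,e_\alpha$ already forces each $c_\alpha=0$ without tracking the minimal valuation $\nu$.
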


\begin{proof}  We first observe that it suffices to assume that $Y$ is
  semisimple.  Indeed, if $\Ad(g)(Y)\in\fg(\cK)$, then the semisimple
  part of $\Ad(g)(Y)$ is $\Ad(g)(X)$.  Since $\cK$ is perfect,
  $\Ad(g)(X)\in\fg(\cK)$~\cite[Theorem 4.4(2)]{Borel}. 

  In view of Theorem \ref{t:reflection}, to obtain the inequality, it
  is sufficient to show that $x\in V(b)$, i.e., $x$ is an eigenvector
  for some element of $W$ with eigenvalue a primitive $b^\mathrm{th}$
  root of unity.

  Let $Q\in \bC[\fg]^G$ be an invariant homogeneous polynomial of
  degree $d$. Note that $Q$ is also an invariant polynomial on
  $\fg(\cK)$ and $\fg(\bcK)$.  Choosing $g\in G(\bcK)$ such that
  $\Ad(g)(X)\in\fg(\cK)$, we see that $Q(X)=Q(\Ad(g)(X))\in\cK$.
 
  Since $x t^{\frac{a}{b}}$ is the leading term of $X$, we have
  have \[ Q(X)=Q(x t^{\frac{a}{b}}) + \textrm{higher order terms} =
  t^{\frac{da}{b}} Q(x) + \textrm{higher order terms}.
  \]
    Then, for the leading term of the
  above expression to be in $\cK$, we must have
  \begin{equation*} \label{eq:chevalley} 
  \textrm{$Q(x)=0\quad $ whenever $b$ does not divide $d$}.
  \end{equation*} 
Here, we are using the fact that $(a,b)=1$.  Since $\bC[\fg]^G \simeq
\bC[\fh]^W$, it now follows from \eqref{vbq} that $x\in V(b)$; thus, the inequality is established.   

It is immediate from Theorem~\ref{t:reflection} that $N(x)=br$ if and
only if $b=h$ and $x$ is a regular eigenvector of a Coxeter element.
This means that the leading term of $X$ is regular semisimple, which
implies that $X$ itself is regular semisimple.  In particular, $Y=X$.

\end{proof}

\begin{rem} \label{r:rationality} Take $X\in \fg(\bar{\cK})$, and let
  $\cO_X\subset \fg(\bar{\cK})$ denote the $G(\bar{\cK})$-orbit of $X$.
  Then $X$ is $G(\bar{\cK})$-conjugate to an element of $\fg(\cK)$ if
  and only if $\cO_X$ is closed under the action of
  $\Gal(\bar{\cK}/\cK)$, i.e., $\cO_X$ is defined over $\cK$.  To see
  this, note that the
  forward implication is trivial.  The converse follows from a theorem
  of Steinberg, stating that any homogeneous space defined over a
  perfect field of cohomological dimension $\le 1$ has a rational
  point~\cite[Theorem 1.9]{Steinberg}.
\end{rem}

\subsection{Gauge classes over Laurent series}
We record a version of the above corollary for gauge equivalence classes over Laurent series. 

\begin{prop}\label{p:conjugacy2}
  Let $\n$ be an irregular singular connection on
  $\cDt$ with Jordan form $d+(X+N)\duu$.  Let $xt^{a/b}$ be the
  leading term of $X$, with $x\in \fh\setminus \{0\}$ and
  $\gcd(a,b)=1$.  Then, $N(x)\geq br$. Moreover, equality is achieved
  if and only if $b=h$, $N=0$, and $x$ is a regular eigenvector of a
  Coxeter element of $W$.
\end{prop}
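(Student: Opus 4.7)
The plan is to reduce Proposition~\ref{p:conjugacy2} to Theorem~\ref{t:reflection} by showing that the leading term $x$ of $X$ lies in $V(b)$; once this is in hand, the inequality and most of the equality case are immediate, and $N=0$ will follow separately by a centralizer computation.

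The central step is to prove $x \in V(b)$ by exploiting the Galois action of $\Gal(\cK_{b_0}/\cK) \cong \mu_{b_0}$, where $b_0$ denotes the ramification index of the cover on which the Jordan form lives (so $u = t^{1/b_0}$). Since $\nabla$ is defined over $\cK$, the pullback $\pi_{b_0}^*\nabla$ is Galois-invariant. Fix a generator $\sigma \in \mu_{b_0}$ acting by $u \mapsto \zeta u$ for some primitive $b_0$-th root of unity $\zeta$. Translating the Jordan-form trivialization by $\sigma$ yields a new trivialization in which $\pi_{b_0}^*\nabla$ is still in Jordan form, with Cartan part $\sigma X$ -- obtained from $X$ by multiplying the coefficient of $u^{-j}$ by $\zeta^{-j}$ -- and the same nilpotent $N$. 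The uniqueness of the Jordan form, combined with the standard fact that two Jordan forms of a given formal connection differ by an $N_G(H)(\cK_{b_0})$-conjugation on the Cartan factor, produces $w \in W$ with $w \cdot X = \sigma X$. Comparing leading terms gives $w(x) = \zeta^{-k} x$, where $k = -ab_0/b$ is the $u$-pole order of $X$; since $\gcd(k, b_0) = b_0/b$, the scalar $\zeta^{-k}$ is a primitive $b$-th root of unity, so $x$ is an eigenvector of $w \in W$ for a primitive $b$-th root of unity and hence $x \in V(b)$.

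Theorem~\ref{t:reflection} then gives $N(x) \geq br$, with equality if and only if $b = h$ and $x$ is a regular eigenvector of a Coxeter element of $W$. In the equality case, $x$ is regular semisimple in $\fh$, so $\fz_\fg(x) = \fh$. The relation $[X, N] = 0$ holds coefficient-by-coefficient in powers of $u$, so in particular $[x, N] = 0$; therefore $N \in \fz_\fg(x) \cap \fn = \fh \cap \fn = \{0\}$.

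The main obstacle will be justifying the ``uniqueness up to a Weyl element'' invoked in the Galois step. Theorem~\ref{t:Jordan} gives uniqueness of $(h, n)$ once a trivialization is fixed, but matching Jordan forms coming from different (here, Galois-translated) trivializations requires tracing the gauge relating them and verifying that the ambiguity on the Cartan factor is precisely a Weyl element. A cleaner route that bypasses this is to adapt the proof of Corollary~\ref{c:conjugacy}: for any $G$-invariant homogeneous polynomial $Q$ of degree $d$, the Chevalley identity $Q(X) = Q(X+N)$ together with the identity $Q(\Ad(g)(b_0\omega)) = Q(b_0\omega) \in \cK$, where $g$ is the gauge putting $b_0\omega$ in Jordan form, allows a comparison of leading $u$-terms; once one verifies that the derivative contribution $u(dg/du)g^{-1}$ has pole of order strictly less than $k$, the leading term $u^{-kd}Q(x)$ of $Q(X+N)$ matches that of $Q(b_0\omega) \in \cK$, forcing $Q(x) = 0$ whenever $b \nmid d$, whence $x \in V(b)$ by~\eqref{vbq}.
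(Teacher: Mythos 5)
Your overall strategy — reduce to Theorem~\ref{t:reflection} by showing $x \in V(b)$ — is exactly what the paper does, and your handling of the $N=0$ part (regular semisimplicity of $x$ forces $\fz_\fg(x)=\fh$, then $[x,N]=0$ gives $N\in\fh\cap\fn=\{0\}$) matches the paper's. But both of your proposed routes to $x\in V(b)$ have gaps, and the paper's route is different in a way worth noting.

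Your first route (Galois-twist the Jordan-form trivialization and use uniqueness) is close in spirit to the paper's, but you correctly flag the missing ingredient yourself: Theorem~\ref{t:Jordan} as stated gives uniqueness of $(h,n)$, and upgrading this to ``two Jordan forms differ by $N_G(H)$-conjugation, acting through $W$ on the Cartan part'' is precisely what needs proof. The paper does not prove this; it instead cites the proposition of \cite[\S 9.8]{BV}, which hands you a finite-order element $\theta\in G$ (a constant, not a loop-group element) with $\Ad(\theta)(X)=\sigma_c(X)$. From there the paper does \emph{not} extract a Weyl element acting on leading terms; it observes instead that the Galois orbit of $X$ is contained in $\Ad(G(\bar\cK))(X)$, so the adjoint orbit is defined over $\cK$, and then invokes Steinberg's theorem (Remark~\ref{r:rationality}) to conclude that $X$ is genuinely $\Ad$-conjugate to an element of $\fg(\cK)$. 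At that point Corollary~\ref{c:conjugacy} applies verbatim. So the paper reduces to the already-proved conjugacy statement rather than re-deriving the $V(b)$ membership via a leading-term comparison.

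Your second route, offered as the ``cleaner'' fix, contains a genuine gap rather than a deferred verification. The identity $Q(\Ad(g)(b_0\omega))=Q(b_0\omega)$ is true but irrelevant as written: $g$ relates $b_0\omega$ and $X+N$ by the \emph{gauge} action, so $\Ad(g)(b_0\omega)=X+N+u\frac{dg}{du}g^{-1}$, and $Q$ is only $\Ad$-invariant, not gauge-invariant. Your argument then hinges on showing the derivative term $u\frac{dg}{du}g^{-1}$ has pole order strictly less than $k$. This is not a routine verification — the gauge transformation reducing a connection to Jordan form can have large poles (shearing transformations), and controlling the resulting derivative contribution is essentially the hard content of the Levelt--Turrittin/Babbitt--Varadarajan reduction. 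In other words, proving that bound would be roughly equivalent to reproving the key input \cite[\S 9.8]{BV} that the paper cites. The distinction between conjugation and gauge-equivalence is exactly why the paper passes through Steinberg's rationality theorem: it converts a gauge statement (the connection is defined over $\cK$) into a conjugacy statement ($X$ is $\Ad(G(\bar\cK))$-conjugate to an element of $\fg(\cK)$), after which invariant polynomials can be applied with no derivative term to worry about.
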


\begin{proof}  We will show that $X$ is conjugate  to an element of
  $\fg(\cK)$; the desired inequality will then follow by Corollary
  \ref{c:conjugacy}.
  Let us write
 \[
 X =\sum_{i} x_i t^{r_i}, \quad \quad x_i\in
 \fh\setminus \{0\}, \quad r_i\in \bQ_{\le 0}.
\] Since $d+(X+N)\duu$ is $G(\bar{\cK})$-gauge equivalent to the
pullback of a connection on $\cDt$, the proposition in \S 9.8 of
\cite{BV} implies that there exists an integer $c\geq 1$ and an
element $\theta\in G$ such that $\theta^c=1$, $cr_i=s_i\in \bZ$, and
 \[
 \Ad(\theta)(X)= \sum_i x_i t^{r_i} \omega_c^{s_i}=\sum_i x_i t^{s_i/c} \omega_c^{s_i}=\sigma_c(X).
 \]
 Here, $\omega_c=e^{2\pi i/c}$ and $\sigma_c$ is the generator of $\Gal(\cK_c/\cK)$ defined by $t^{1/c}\mapsto \omega_ct^{1/c}$. Applying $\theta$ repeatedly, we see that 
 \[
 \Ad({\theta^j})(X)=\sigma_c^j(X), \quad \quad j=0,\dots, c-1.
 \]

 Since the action of $\Gal(\bar{\cK}/\cK)$ on $X$ factors through
 $\Gal(\cK_c/\cK)$, we have shown that the Galois orbit of $X$ is
 contained in $\Ad(G(\bar{\cK}))(X)$.  It is now immediate that this
 adjoint orbit is defined over $\cK$, so by Remark
 \ref{r:rationality}, $X$ is conjugate to an element of $\fg(\cK)$.

 As in the proof of Corollary~\ref{c:conjugacy}, $N(x)=br$ holds if
 and only if $b=h$ and $x$ is a regular eigenvector of a Coxeter
 element, and this implies that $X$ is regular semisimple.  Since $N$
 commutes with $X$, we obtain $N=0$.
 \end{proof} 

 \begin{rem} The corollary in \S 9.8 of \cite{BV} gives a gauge
   version of Remark \ref{r:rationality}.  Let $\n=d+(X+N)\duu$ be a
   $G$-connection in Jordan form on $\cDb$ for some $b$; here,
   $u^b=t$.  Then $\n$ is $G(\bar{\cK})$-gauge equivalent to the
   pullback of a connection on $\cDt$ if and only if the gauge class
   of $\n$ is closed under the action of $\Gal(\bar{\cK}/\cK)$.
\end{rem}

\section{Proofs of the main theorems} \label{s:Proof}

\subsection{Proof of Theorem \ref{t:main}}

Let $(\cE,\nabla)$ be an irregular singular formal flat $G$-bundle. Let $d+(X+N)\duu$
denote the Jordan form of $\nabla$, where $u=t^{1/b}$,  $X\in
\fh(\bC[t^{-\frac{1}{b}}])$, and $N\in \fn(\bC)$. Let $xt^{-k/b}$ be the leading term
of $X$. Since $\n$ is irregular singular, $k>0$, so by Proposition
\ref{p:conjugacy2}, $N(x)\geq br$.

Now, we consider the adjoint connection. By uniqueness of the Jordan
form, $ d+(\Ad(X) + \Ad(N))\duu$ is the Jordan form for $\Ad(\nabla)$.
We view $\Ad(X)$ as a matrix in terms of a basis for $\fg$ consisting of weight
vectors.  In this basis, $\Ad(X)$ is clearly diagonal, and the
irregularity is the sum of the order of the poles of the diagonal
entries.  If $\a(x)\ne 0$, then the corresponding diagonal element has
a pole of order $k/b$, thereby contributing $k/b\ge 1/b$ to the
irregularity.   We thus obtain
\begin{equation}\label{eq:IrrN}
\Irr(\Ad(\nabla))\geq N(x)\frac{k}{b} \geq \frac{N(x)}{b} \geq r.
\end{equation}\qed

\subsection{Proof of Theorem \ref{t:main2}} The equivalence $(2)\iff
(3)$ and the implication $(3)\implies (1)$ have been established in
Proposition \ref{p:FG}. We will complete the proof of the theorem by
showing that $(1)\implies (2)$.  Suppose $\nabla$
is a connection with $\Irr(\Ad(\nabla))=r$. This means that  all the inequalities
in \eqref{eq:IrrN} are equalities. In particular, we have
$N(x)=br$, implying that $b=h$ and $k=1$.
Thus, $\nabla$ has slope $1/h$.  \qed

\begin{bibdiv}
\begin{biblist}

\bib{BV}{article}
  {
    AUTHOR = {Babbitt, B.},
    Author={Varadarajan, P.}, 
    TITLE = {Formal reduction theory of meromorphic differential equations:
              a group theoretic view},
   JOURNAL = {Pacific J. Math.},
  FJOURNAL = {Pacific Journal of Mathematics},
    VOLUME = {109},
      YEAR = {1983},
    NUMBER = {1},
     PAGES = {1--80},
 }

\bib{BE}{article}
  {
    AUTHOR = {Bloch, S.},
    Author={Esnault, H.}, 
    TITLE = {Local {F}ourier transforms and rigidity
for {D}-modules},
   JOURNAL = {Asian J. Math.},
    VOLUME = {8},
      YEAR = {2004},
    NUMBER = {4},
     PAGES = {587--606},
 }

\bib{Borel}{book}
{
    AUTHOR = {Borel, Armand},
     TITLE = {Linear Algebraic Groups},
    SERIES = {Graduate Texts in Mathematics},
 VOLUME = {126},
 PUBLISHER = {Springer-Verlag},
   ADDRESS = {Berlin},
      YEAR = {1991},
}

\bib{BremerSagemodspace}{article}
  {
    AUTHOR = {Bremer, C.},
    Author={Sage, D.~S.}, 
    TITLE = {Moduli spaces of irregular singular connections},
   JOURNAL = {Int. Math. Res. Not. IMRN},
    VOLUME = {2013},
      YEAR = {2013},
     PAGES = {1800--1872}
 }

\bib{BremerSageisomonodromy}{article}
  {
    AUTHOR = {Bremer, C.},
    Author={Sage, D.~S.}, 
    TITLE = {Isomonodromic deformations of connections with
      singularities of parahoric formal type},
   JOURNAL = {Comm. Math. Phys.},
    VOLUME = {313},
      YEAR = {2012},
     PAGES = {175--208},
 }

\bib{BremerSagemink}{article}
  {
    AUTHOR = {Bremer, C.},
    Author={Sage, D.~S.}, 
     Title={A theory of minimal K-types for flat G-bundles},
JOURNAL = {Int. Math. Res. Not. IMRN},
    VOLUME = {2018},
    Year={2018},
PAGES={3507--3555}
 }

\bib{BremerSagereg}{article}
  {
    AUTHOR = {Bremer, C.},
    Author={Sage, D.~S.}, 
     Title={ Flat G-bundles and regular strata for reductive groups},
    Journal={arXiv:1309.6060}, 

    }

 \bib{Chen}{article}
{
 Author={Chen, T-H.},
  Title={Vinberg's $\theta$-groups and rigid connection},
 Journal={arXiv:1409.3517},
 Year={2015}
}

 \bib{CK}{article}
{
 Author={Chen, T-H.},
 Author={Kamgarpour, M.},
  Title={Preservation of depth in local geometric Langlands correspondence},
 Journal={Transactions of AMS},
 Year={2016},
}

\bib{Deligne}{book}
{
    AUTHOR = {Deligne, Pierre},
     TITLE = {\'{E}quations diff\'erentielles \`a points singuliers
              r\'eguliers},
    SERIES = {Lecture Notes in Mathematics},
VOLUME={163},
 PUBLISHER = {Springer-Verlag},
   ADDRESS = {Berlin},
      YEAR = {1970},
}

\bib{FG} {article}
{
AUTHOR = {Frenkel, E.},
Author={Gross, B.},
     TITLE = {A rigid irregular connection on the projective line},
   JOURNAL = {Ann. of Math. (2)},
  FJOURNAL = {Annals of Mathematics. Second Series},
    VOLUME = {170},
      YEAR = {2009},
    NUMBER = {3},
     PAGES = {1469--1512},
}

\bib{GR}{article}
  {
   Author = {Gross, B.},
    AUTHOR = {Reeder, M.},
          TITLE = {Arithmetic invariants of discrete Langlands parameters},
   JOURNAL = {Duke Math. J.},
      YEAR = {2010},
    NUMBER = {154},
     PAGES = {431--508},
}

\bib{HNY}{article} 
{
Author={Heinloth, J.},
Author={Ng\^o, B. C.},
Author={Yun, Z.},
Title={Kloosterman sheaves for reductive groups}, 
Year={2013}, 
Journal={Annals of Math.},
Volume={177},
}

\bib{Katz}{article}
{
    AUTHOR = {Katz, N. M.},
     TITLE = {On the calculation of some differential {G}alois groups},
   JOURNAL = {Invent. Math.},
      VOLUME = {87},
      YEAR = {1987},
    NUMBER = {1},
     PAGES = {13--61},
   }

\bib{Katzbook}{book}
{
    AUTHOR = {Katz, N. M.},
     TITLE = {Exponential sums and differential equations},
  SERIES = {Annals of Mathematics Studies},
VOLUME={124},
 PUBLISHER = {Princeton University Press},
   ADDRESS = {Princeton},
      YEAR = {1990},
   }

 \bib{KL88}{article}
{
 Author={Kazhdan, D.},
 Author={Lusztig, G.},
     TITLE = {Fixed point varieties on affine flag manifolds},
   JOURNAL = {Israel J. Math.},
  FJOURNAL = {Israel Journal of Mathematics},
    VOLUME = {62},
      YEAR = {1988},
    NUMBER = {2},
     PAGES = {129--168},
}

\bib{Kostant}{article}
  {
   Author = {Kostant, B.},
          TITLE = {The principal three-dimensional subgroup and the Betti numbers
of a complex simple Lie group},
   JOURNAL = {Amer. J. Math.},
      YEAR = {1959},
      Pages ={973--1032}
}

\bib{Levelt}{article} 
{ Author={Levelt, G.},
TITLE = {Jordan decomposition for a class of singular differential
              operators},
   JOURNAL = {Ark. Mat.},
  FJOURNAL = {Arkiv f\"or Matematik},
    VOLUME = {13},
      YEAR = {1975},
     PAGES = {1--27},
}

\bib{MasoudConductor}{article} 
{ 
Author = {Kamgarpour, M.},
Title = {On the notion of conductor in the local geometric Langlands correspondence},
Journal = {arXiv:1507.02733},
Year = {2015}
}

\bib{MasoudHarish}{article}
{
Author={Kamgarpour, M.},
Title={Compatibility of Feigin-Frenkel Isomorphism and Harish-Chandra Isomorphism for jet algebras},
Journal={Transactions of AMS},
Volume={368},
Year={2016},
Pages={2019--2038},
}

\bib{Masoud}{article} 
{ 
Author = {Kamgarpour, M.},
Title = {Stabilisers of eigenvectors of finite reflection groups},
Journal = {arXiv:1512.01591},
Year = {2016}
}

\bib{MasoudTravis}{article}
{
Author={Kamgarpour, M.},
Author={Schedler, T.},
Title={Geometrization of principal series representations of reductive groups},
Journal={Annales de l\'institut {F}ourier},
Year={2015},
Volume={65},
Number={5},
Pages={2273--2330},
}

\bib{R18}{article}
{ Author={Reeder, M.},
  Title={Adjoint swan conductor I: the essentially tame case},
  JOURNAL = {Int. Math. Res. Not. IMRN},
    VOLUME = {2018},
    Year={2018},
PAGES={2661--2692}
}

\bib{RY}{article}
{ Author={Reeder, M.},
Author={Yu, J. K.},
Title={Epipelagic representations and invariant theory}, 
Journal={J. Amer. Math. Soc.}, 
Volume={27},
Year={2014},
page={437--477}
}

\bib{Sageisaac}{article}
  {
   Author = {Sage, D. S.},
          TITLE = {Regular strata and moduli spaces of irregular
            singular connections},
   book ={
title= {New trends in analysis and interdisciplinary applications:
     Selected contributions of the 10th ISAAC Congress, Macau 2015},
editor={P. Dang},
editor={M. Ku},
editor={T. Qian},
editor={L. G. Rodino},
publisher={Birkh\"auser/Springer},
address={Cham},
year={2017}
},
}

\bib{Springer}{article}
  {
   Author = {Springer, T. A.},
          TITLE = {Regular elements of finite reflection groups},
   JOURNAL = {Invent. Math.},
Volume={25},
      YEAR = {1974},
      Pages ={159--198}
}

\bib{Steinberg}{article}
  {
   Author = {Steinberg, R.},
          TITLE = {Regular elements of semi-simple algebraic groups},
   JOURNAL = {Publ. Math. Inst. Hautes \'Etudes Sci.},
Volume={25},
      YEAR = {1965},
      Pages ={159--198}
}

\bib{Turrittin}{article}{
Author={Turrittin, H. L.}, 
TITLE = {Convergent solutions of ordinary linear homogeneous
              differential equations in the neighborhood of an irregular
              singular point},
   JOURNAL = {Acta Math.},
  FJOURNAL = {Acta Mathematica},
    VOLUME = {93},
      YEAR = {1955},
     PAGES = {27--66},
}

\bib{Yun}{article}
  {
    AUTHOR = {Yun, Z.},
     Title={Epipelagic representations and rigid local systems},
    Journal={Selecta Math.}, 
    Year={2016},
pages={1195--1243},
 }

 \bib{Zhu}{article} { AUTHOR = {Zhu, X.}, Title={Frenkel-Gross's
     irregular connection and Heinloth-Ng\^o-Yun's are the same},
   Journal={Selecta Math.}, Year={2017}, pages={245--274} }

\end{biblist} 
\end{bibdiv} 
  \end{document}